\newtheorem{thm}{Theorem}[section]
\newtheorem{lem}[thm]{Lemma}
\newtheorem{cor}[thm]{Corollary}
\theoremstyle{definition}
\newtheorem{dfn}{Definition}[section]
\newtheorem{ques}{Question} 
\theoremstyle{remark}
\newtheorem{rem}{Remark}
\newcommand{\C}{\mathbb{C}}
\newcommand{\N}{\mathbb{N}}
\newcommand{\R}{\mathbb{R}}
\newcommand{\cC}{\mathcal{C}}
\newcommand{\cH}{\mathcal{H}}
\newcommand{\cW}{\mathcal{W}}
\newcommand{\fm}{\mathfrak{m}}
\newcommand{\Com}[2]{\left[{#1}, {#2}\right]}
\newcommand{\NrmPhi}[1]{   \mbox{$\left|{#1}\right|_{\Phi}$}  }
\newcommand{\NrmPhib}[1]{\mbox{$\left|{#1}\right|_{\widetilde{\Phi}}$}}
\newcommand{\Nrm}[1]{\left\|{#1}\right\|}
\newcommand{\kpb}[1]{\widetilde{k_p^-}\left(#1\right)}
\newcommand{\diag}{\operatorname{diag}}
\newcommand{\diam}{\operatorname{diam}}
\numberwithin{equation}{section}
\title{Quasicentral modulus and self-similar sets:\\ a supplementary result to Voiculescu's work}
\author{Kozo Ikeda  
and 
Masaki Izumi\\
Graduate School of Science \\
Kyoto University \\
Sakyo-ku, Kyoto 606-8502, Japan }
\begin{document}

\maketitle
\begin{abstract} In his recent work, Voiculescu generalized his remarkable formula for the quasicentral modulus 
of a commuting $n$-tuple of hermitian operators with respect to the $(n,1)$-Lorentz ideal to the case where its spectrum 
is contained in a Cantor-like self-similar set in a certain class. 
In this note, we treat general self-similar sets satisfying the open set condition, and obtain lower and upper bounds of the quasicentral modulus. 
Our proof shows that Voiculescu's formula holds for a class of self-similar sets including the Sierpinski gasket and 
the Sierpinski carpet. 
\end{abstract}

\section{Introduction}
Pursuing his idea in the proof of a non-commutative Weyl-von Neumann theorem for C$^*$-algebras \cite{V76}, 
D. Voiculescu introduced in \cite{V79} the quasicentral modulus $k_{\Phi}(\tau)$ for an $n$-tuple $\tau$ of Hilbert space operators 
with respect to a symmetric normed ideal $\mathfrak{S}_\Phi$. 
With this quantity, he established a far reaching generalization of the classical Weyl-von Neumann theorem in perturbation theory: 
an $n$-tuple of commuting hermitian operators $\tau$ is a $\mathfrak{S}_{\Phi}^{(0)}$-perturbation of a commuting tuple of 
diagonal hermitian operators if and only if $k_\Phi(\tau)$ vanishes. 
Moreover he showed that $k_n(\tau):=k_{\cC_n}(\tau)$ always vanishes for the Schatten-von Neumann ideal $\cC_n$ 
(see also \cite{BV}, \cite{DV}, \cite{V81}, \cite{V84}, \cite{V90} for later development). 

Besides the significance of whether it vanishes or not, the quantity $k_n^-(\tau):=k_{\cC_n^-}(\tau)$ itself is of particular interest for 
the $(n,1)$-Lorentz ideal $\cC_n^-$ because Voiculescu obtained the following remarkable formula:
 
\begin{thm}[{\cite[Theorem 4.5]{V79}}]
\label{thm kn- estimate} 
Let $\tau$ be an $n$-tuple of commuting hermitian operators, and let $\fm$ be the multiplicity function of the Lebesgue absolutely 
continuous part of $\tau$. Then we have
$$k_n^{-}(\tau)^n = \gamma_n \int_{\mathbb{R}^n} \fm(s) d\lambda(s), $$
where $0<\gamma_n<\infty$ is a constant independent of $\tau$ and $\lambda$ is the Lebesque measure.
\end{thm}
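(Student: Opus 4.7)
The plan is to exploit the general properties of the quasicentral modulus (monotonicity, unitary invariance, behavior under direct sums, and invariance under $\cC_n^{-,(0)}$-perturbations of $\tau$) to reduce the problem to a single model computation, and then to establish matching upper and lower bounds on that model. First I would show that $k_n^-(\tau)$ depends only on the absolutely continuous part $\tau_{ac}$: any singular commuting hermitian tuple is a $\cC_n^{-,(0)}$-perturbation of a diagonal one (indeed, compact perturbation suffices when the underlying spectral measure is concentrated on a Lebesgue-null set, since one can use finite-rank spectral projections of a sequence of shrinking partitions whose total trace-class commutator mass is controlled by the measure of the supports). This step already requires a separate argument, but it is known and I would cite it.

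Next, by the multiplicity theory for commuting tuples, the absolutely continuous part is unitarily equivalent to $\bigoplus_{k=1}^\infty M_{E_k}^{\oplus k}$, where $E_k = \{s \in \R^n : \fm(s) = k\}$ and $M_{E_k}$ denotes the $n$-tuple of coordinate multiplications on $L^2(E_k, d\lambda)$. Using the identity $k_n^-(\tau_1 \oplus \tau_2)^n = k_n^-(\tau_1)^n + k_n^-(\tau_2)^n$ for orthogonally-supported tuples, which itself follows by summing diagonally chosen approximate units, everything is reduced to proving
\begin{equation*}
k_n^-(M_E)^n = \gamma_n \, \lambda(E)
\end{equation*}
for every bounded Borel set $E \subset \R^n$ and for a constant $\gamma_n$ independent of $E$. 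Translation invariance and countable additivity over disjoint decompositions imply that $E \mapsto k_n^-(M_E)^n$ is a translation-invariant Borel measure on $\R^n$; hence once one shows it is finite on cubes, it must be a constant multiple of $\lambda$, and the constant $\gamma_n$ can be \emph{defined} by the value on the unit cube. This shifts the problem to proving two facts about the unit cube $Q$: that $k_n^-(M_Q) < \infty$, and that $k_n^-(M_Q) > 0$.

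For the upper bound I would build explicit approximate units from smooth cutoffs or Fourier projections on a larger enclosing cube: truncating at frequency $N$ gives a finite-rank operator whose commutator with each coordinate multiplication has singular values of order $N^{-1}$ with multiplicity $N^n$, yielding a uniform bound in the $(n,1)$-Lorentz quasi-norm and hence $k_n^-(M_Q) < \infty$. The main obstacle is the lower bound: showing that no approximate unit can beat this rate. This is the delicate part of Voiculescu's argument and is what forces the $(n,1)$-Lorentz ideal (rather than, say, $\cC_n$) into the picture. The standard approach is to derive a commutator trace inequality that pairs $[A, M_j]$ against a suitable auxiliary operator (morally, integration against the Riesz kernel for the Laplacian on $Q$) so that the trace picks up $\lambda(Q)$ regardless of the choice of $A$ in the approximate unit; taking the infimum over $A$ then produces the positive constant $\gamma_n$. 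Matching this lower constant with the upper one is not required at this stage, since the translation-invariance argument already guarantees a single $\gamma_n$ works for all $E$; one only needs both bounds to be strictly between $0$ and $\infty$.
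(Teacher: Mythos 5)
Theorem \ref{thm kn- estimate} is quoted from \cite{V79} rather than reproved here, but this paper's proof of its generalization, Theorem \ref{tinequality}, follows the same template, so the comparison is meaningful. Your reduction to the absolutely continuous part, the multiplicity decomposition, and the translation-invariance framework are sound in outline. The genuine gap is at the central step: you invoke $k_n^-(\tau_1 \oplus \tau_2)^n = k_n^-(\tau_1)^n + k_n^-(\tau_2)^n$ for tuples with disjoint joint spectra and assert that this ``follows by summing diagonally chosen approximate units.'' It does not. The $(n,1)$-Lorentz norm of $[A_1,\tau_1]\oplus [A_2,\tau_2]$ is computed from the merged singular value sequence sorted in decreasing order, and this quantity is not $\ell^p$-additive over blocks; the interleaving of the two lists matters. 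This very additivity (\ref{lp2}) is recorded in the present paper as an open problem, echoing \cite[Remark 6.1]{V2021}, and the discussion surrounding (\ref{lp3}) exhibits an explicit counterexample to the direct-sum-of-approximate-units heuristic you rely on. Without it, the claim that $E \mapsto k_n^-(M_E)^n$ is a translation-invariant Borel measure --- hence a multiple of Lebesgue measure --- collapses.

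Voiculescu's argument, and its generalization here, replace that additivity by the much weaker ampliation homogeneity $k_p^-(\tau \otimes I_m) = m^{1/p}\, k_p^-(\tau)$ (\cite[Theorem 3.1]{V2021}; cf.\ Lemma \ref{tensor diagonal}), which concerns only $m$ copies of one and the same tuple and is accessible via the averaging trick of Theorem \ref{averaging}. Self-similarity makes this an effective substitute: a cube splits into $m^n$ congruent translated subcubes, the direct sum of coordinate multiplications over those subcubes becomes, once translations are removed by Theorem \ref{averaging},(3), a rescaled ampliation of the tuple on a single subcube, and this lets one compare $k_n^-(\tau_X)^n$ with $\lambda(X)$ whenever $X$ is a finite union of dyadic cells, a limiting argument then handling general $X$. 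The higher-multiplicity case $\bigoplus_j \tau_{Y_j}$ is treated by composing similitudes to carry the $Y_j$ onto virtually disjoint scaled copies --- the device used in the proof of Theorem \ref{tinequality} --- so the direct sum becomes a single multiplication tuple $\tau_Y$, again with no appeal to additivity. Your sketches of the endpoint bounds (finite-rank spectral truncations for finiteness, as in Lemma \ref{upper}; a singular-integral pairing for positivity, as behind Lemma \ref{non-zero k_p^-} via \cite[Corollary 4.7]{DV}) are the correct ingredients, but the $\ell^p$-additivity you treat as a known lemma is the one ingredient that is neither available nor used.
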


In his recent work \cite{V2021}, Voiculescu generalized the above result to the case where the joint spectrum $\sigma(\tau)$ 
is a subset of a Cantor-like self-similar set $K$ in a certain class and the Lebesgue measure is replaced by the Hausdorff measure 
(see Theorem \ref{A4} below).  
The purpose of this note is to treat a larger class of self-similar sets than Voiculescu did. 
Although we cannot show the precise equality as above, we obtain upper and lower bounds establishing that $k_p^-(\tau)^p$ 
behaves like the Hausdorff measure $\cH_p$ with $p$ the Hausdorff dimension of $K$ for general self-similar sets satisfying 
the open set condition (Theorem \ref{main}). 
Our proof shows that Voiculescu's formula holds for a class of self-similar sets including the Sierpinski gasket and 
the Sierpinski carpet (Remark \ref{Sierpinski}). 

The authors would like to thank Dan Voiculescu for useful discussion, and Kenneth Falconer and Jun Kigami for having informed 
the authors of the reference \cite{Mo}.

\section{Preliminaries}
Our basic references for normed ideals are \cite{GK} and \cite{S}, while we adopt 
the notation in \cite{V79} and \cite{V2021}.  

Throughout the paper $H$ denotes a separable Hilbert space. 
The symbols $\mathscr{L}(H), \mathscr{K}(H), \mathscr{P}(H), \mathscr{F}(H), \mathscr{R}(H)_1^{+}$ 
will respectively denote the bounded operators, the compact operators, the finite rank orthogonal projections, the finite rank operators, and the finite rank positive contractions on $H$. $\mathscr{P}(H)$ and $\mathscr{R}^{+}_1(H)$ are both directed sets by natural order of self-adjoint operators. 

For $T\in \mathscr{K}(H)$, we denote by $\{\mu_j(T)\}_{j=1}^\infty$ the singular value sequence of $T$, that is, the 
set of eigenvalues of $|T|$ in decreasing order. 
For a symmetric function $\Phi$, we denote by $|T|_{\Phi}$ the norm of the operator ideal corresponding to $\Phi$. 
For $1\leq p< \infty$, let 
$$|T|_p=\left(\sum_{j=1}^\infty\mu_j(T)^p\right)^{1/p},$$
and for $1<p\leq \infty$, let
$$|T|_p^-=\sum_{j=1}^\infty \frac{\mu_j(T)}{j^{1-\frac{1}{p}}}.$$
The Schatten-von Neumann ideal $\cC_p$ is the set of compact operators $T$ with finite $|T|_p$, and 
the $(p,1)$-Lorentz ideal $\cC_p^-$ is the set of compact operators $T$ with finite $|T|_p^-$. 
Then we have 
$$\bigcup_{r<p}\cC_r\subset \cC_p^-\subset \cC_p.$$

Let $\tau^{(j)} = (T_1^{(j)}, T_2^{(j)}, \ldots , T_n^{(j)}) \in (\mathscr{L}(H))^{n},\  \sigma = (S_1, S_2, \ldots , S_n) \in (\mathscr{L}(H))^{m}$, 
$X, Y \in \mathscr{L}(H)$, and $Z\in \mathscr{L}(K)$. 
We will write
\[
\tau^{(1)} \oplus \tau^{(2)} \equiv (T_1^{(1)} \oplus T_1^{(1)}, T_2^{(1)} \oplus T_2^{(2)}, \ldots , T_n^{(1)} \oplus T_n^{(2)}) \in \mathscr{L}(H \oplus H)^{n}
\]
\[
\tau^{(1)} + \tau^{(2)} \equiv (T_1^{(1)} + T_1^{(1)}, T_2^{(1)} + T_2^{(2)}, \ldots , T_n^{(1)} + T_n^{(2)}) \in \mathscr{L}(H)^{n}
\]
\[
X\tau Y = (XT_1Y, XT_2Y, \ldots , XT_nY) \in \mathscr{L}(H)^{n}
\]
\[
\tau \otimes Z = (T_1 \otimes Z, T_2 \otimes Z, \ldots, T_n \otimes Z) \in \mathscr{L}(H \otimes K)^n
\]
\[
\left [ X, \tau \right] = (\left[X, T_1\right], \left[X, T_2\right], \ldots , \left[X, T_n\right]) \in \mathscr{L}(H)^{n}
\]
\[
\|\tau \| = \max_{1 \le j \le n} \| T_j \|
\]
\[
|\tau |_{\Phi} = \max_{1 \le j \le n} | T_j |_{\Phi},\quad |\tau|_p=\max_{1 \le j \le n} | T_j |_p,\quad  
|\tau|_p^-=\max_{1 \le j \le n} | T_j |_p^-. 
\]

\begin{dfn}[{\cite[Section 1]{V79}}]
For $\tau \in \mathscr{L}(H)^n$, we define 
\[
k_\Phi(\tau) = \liminf_{A \in \mathscr{R}^+_1(H)} \NrmPhi{\Com{\tau}{A}}.
\]
When $|T|_{\Phi}=|T|_p$, we simply denote it by $k_p(\tau)$, and 
when $|T|_{\Phi}=|T|_p^-$, we denote it by $k_p^-(\tau)$. 
\end{dfn}

We collect basic facts frequently used in this paper (see {\cite[Section 1]{V79}}). 

\begin{thm}\label{basics} 
Let the notation be as above. 
The following hold:
\begin{itemize}
\item[$(1)$] If $\{A_i\}$ is a sequence in $\mathscr{R}_1^{+}(H)$ conversing to $I$ in the weak operator topology, then
\[
k_\Phi(\tau) \le \liminf_{i \to \infty} | \left[ A_i, \tau \right] |_{\Phi}.
\]

\item[$(2)$] If $k_\Phi(\tau)$ is finite, there exists an increasing sequence $\{A_i\}$ in $\mathscr{R}_1^{+}(H)$ converging to $I$ 
satsifying  
\[
\lim_{i \to \infty} | \left[A_i, \tau \right] |_{\Phi} = k_{\Phi}(\tau).
\]

\item[$(3)$] 
Let $\tau^{(j)} \in \mathscr{L}(H)^{n}$ for $j\in \N$. We have
\begin{equation*}
     \max_{j = 1, 2} k_{\Phi}(\tau^{(j)}) \le k_{\Phi}(\tau^{(1)} \oplus \tau^{(2)}) \le k_{\Phi}(\tau^{(1)}) + k_{\Phi}(\tau^{(2)})
\end{equation*}
and 
\begin{equation*}
    k_{\Phi}\left(\bigoplus_{j = 1}^{\infty} \tau^{(j)}\right) = \lim_{m \to \infty} k_{\Phi}\left(\bigoplus_{j = 1}^{m} \tau^{(j)}\right).
\end{equation*}
\end{itemize}
\end{thm}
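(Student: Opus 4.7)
The three items are standard facts from the theory of quasicentral modulus (cf.\ \cite{V79}); my plan is to prove (1) first, derive (2) from (1) by a diagonal extraction, and obtain (3) from these two together with elementary operator-ideal manipulations.

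For (1), write $L = \liminf_i |[A_i,\tau]|_\Phi$. Since $k_\Phi(\tau)$ is a liminf over the directed set $(\mathscr{R}^+_1(H),\leq)$, proving $k_\Phi(\tau) \leq L$ reduces to producing, for every $\epsilon > 0$ and every $A_0 \in \mathscr{R}^+_1(H)$, some $B \geq A_0$ with $|[B,\tau]|_\Phi < L + \epsilon$. After passing to a subsequence of $\{A_i\}$ realizing $L$, I would exploit that $A_0$ has finite rank and $A_i \to I$ in the WOT, so $A_i \to I$ in operator norm on the finite-dimensional range of $A_0$. This allows perturbing $A_i$ by a finite-rank positive operator to obtain $B_i \in \mathscr{R}^+_1(H)$ with $B_i \geq A_0$, $B_i - A_i$ of rank bounded independently of $i$, and $\|B_i - A_i\| \to 0$. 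The commutator $[B_i - A_i,\tau]$ then has bounded rank and vanishing operator norm, so $|[B_i - A_i,\tau]|_\Phi \to 0$ by the fact that the $\Phi$-norm of a rank-$r$ operator is controlled by $r$ times its operator norm; consequently $|[B_i,\tau]|_\Phi \to L$.

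For (2), fix an increasing sequence of finite rank projections $P_m \nearrow I$ and build $\{A_m\}$ inductively: given $A_{m-1}$, apply the liminf definition together with the flexibility supplied by (1) to the directed-set index $A_{m-1} \vee P_m$ to obtain $A_m \geq A_{m-1} \vee P_m$ with $|[A_m,\tau]|_\Phi < k_\Phi(\tau) + 1/m$. The domination $A_m \geq P_m$ forces $A_m \to I$ strongly, and part (1) together with the upper bound $k_\Phi(\tau) + 1/m$ squeezes the limit to $k_\Phi(\tau)$.

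For (3), the upper bound $k_\Phi(\tau^{(1)} \oplus \tau^{(2)}) \leq k_\Phi(\tau^{(1)}) + k_\Phi(\tau^{(2)})$ follows by taking sequences $\{A_i^{(j)}\}$ from (2), observing that $[A_i^{(1)} \oplus A_i^{(2)},\tau^{(1)} \oplus \tau^{(2)}] = [A_i^{(1)},\tau^{(1)}] \oplus [A_i^{(2)},\tau^{(2)}]$, and invoking the triangle inequality for the $\Phi$-norm of a direct sum together with (1). For the lower bound, take an increasing sequence $\{A_i\} \subset \mathscr{R}^+_1(H \oplus H)$ realizing $k_\Phi(\tau^{(1)} \oplus \tau^{(2)})$ and compress to $B_i = P_1 A_i P_1 \in \mathscr{R}^+_1(H)$, where $P_1$ projects onto the first summand; matrix-entry convergence gives $B_i \to I$ WOT on $H$, the $(1,1)$-block of $[A_i,\tau^{(1)} \oplus \tau^{(2)}]$ is precisely $[B_i,\tau^{(1)}]$, and compression by a projection does not increase symmetric norms, so (1) yields $k_\Phi(\tau^{(1)}) \leq k_\Phi(\tau^{(1)} \oplus \tau^{(2)})$. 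For the countable direct sum, monotonicity from the two-summand case gives $\geq$ of the finite partial sums by the full direct sum, while the reverse inequality follows by a diagonal argument using (1) applied to sequences provided by (2) for each $\bigoplus_{j\leq m}\tau^{(j)}$.

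The main obstacle is the concrete perturbation in (1): designing $B_i$ from $A_i$ so that it lies in $\mathscr{R}^+_1(H)$, dominates the prescribed $A_0$, and has a commutator perturbation that vanishes in $\Phi$-norm. This requires using simultaneously the finite rank of $A_0$ (to bound the rank of the perturbation) and the WOT convergence $A_i \to I$ (to make its operator norm small on the range of $A_0$); once this device is in place, parts (2) and (3) follow by formal manipulation of the definition and the standard ideal-theoretic inequalities.
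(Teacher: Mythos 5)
The paper does not prove this theorem---it records these as known facts and cites them to Section~1 of \cite{V79}. Your proof is a correct reconstruction of exactly the arguments that reference establishes, and all three parts check out. The reformulation in (1) of the directed-set liminf (``for every $A_0$ and $\epsilon$ there is $B \geq A_0$ with $|[B,\tau]|_\Phi < L+\epsilon$'') is right, and the perturbation device works; a concrete choice is $B_i = P_0 + (I-P_0)A_i(I-P_0)$ with $P_0$ the range projection of $A_0$, which is a positive contraction of finite rank dominating $A_0$, and $B_i - A_i$ has rank bounded by a multiple of $\operatorname{rank}(P_0)$ with $\|B_i - A_i\|\to 0$. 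One small point worth making explicit: to get $\|(I-A_i)P_0\|\to 0$ (not merely $\|P_0(I-A_i)P_0\|\to 0$) you should use that for positive contractions $0\le I-A_i\le I$ implies $\|(I-A_i)\xi\|^2 \le \langle (I-A_i)\xi,\xi\rangle$, so WOT convergence to $I$ already gives strong convergence; combined with the finite dimensionality of $\operatorname{ran}(P_0)$ this yields the needed norm convergence. The inductive construction in (2) via elements dominating both $A_{m-1}$ and a projection $P_m$ from an exhausting sequence, the direct-sum triangle inequality and the compression argument in (3), and the diagonal argument for the countable direct sum, are all the standard route and are stated correctly; the only ingredients left implicit (the ideal-norm bound $|X|_\Phi \le \operatorname{rank}(X)\,\|X\|\,|P_1|_\Phi$ and $\mu_k(PXP)\le\mu_k(X)$) are elementary and used correctly. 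In short, you have supplied a correct proof of facts the paper simply cites.
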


The following properties can be shown by the averaging trick (see \cite[Proof of Proposition 1.6]{V79}), 

\begin{thm}\label{averaging}
Let the notation be as above. 
Then the following hold:
\begin{itemize}
\item[$(1)$] For $m\in \N$, there exists an increasing sequence $\{A_i\}$ in $\mathscr{R}^+_1(H)$ 
converging to $I$ and satisfying  
    \[
    \lim_{i \to \infty} \NrmPhi{\Com{A_i \otimes I_m}{\tau \otimes I_m}} = k_\Phi(\tau \otimes I_m),
    \]
where $I_m$ is the identity operator of $\C^m$. 
\item[$(2)$] Let $\tau^{(j)}\in \mathscr{L}(H)^{n}$ for $j=1,2,\ldots,m$. 
Then there exists an increasing sequence $\{A_i\}_{i=1}^\infty$ in $\mathscr{R}^+_1(H)^{\oplus m} \subset \mathscr{R}^+_1(H^{\oplus m})$
converging to $I$ and satisfying 
    \[
    \lim_{i \to \infty} |[A_i,\tau^{(1)}\oplus \tau^{(2)}\oplus \cdots \oplus \tau^{(m)}]|_\Phi = 
    k_\Phi(\tau^{(1)}\oplus \tau^{(2)}\oplus \cdots \oplus \tau^{(m)}).
    \]
\item[$(3)$] Let $\tau^{(j)} \in \mathscr{L}(H)^{n}$ and $\lambda^{(j)} \in (\mathbb{C}I)^{n}$ for $j = 1, 2, \ldots, m$. Then we have 
\[
k_{\Phi}(\tau^{(1)} \oplus \cdots \oplus \tau^{(m)}) = k_{\Phi}((\tau^{(1)} - \lambda^{(1)})  \oplus \cdots \oplus (\tau^{(m)} - \lambda^{(m)})).
\]
\end{itemize}
\end{thm}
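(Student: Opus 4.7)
The plan is to apply Voiculescu's averaging trick in three successive variants, using in each case an approximating sequence furnished by Theorem \ref{basics}(2) and then averaging it against a compact group of unitaries that commutes with the given $n$-tuple, so that the averaging preserves commutator norms while forcing the operator into the desired smaller class.

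For (1), I would start with an increasing sequence $\{B_i\}\subset\mathscr{R}_1^+(H\otimes\C^m)$ converging to $I$ in the weak operator topology and realizing $k_\Phi(\tau\otimes I_m)$. Averaging against the Haar measure of $U(m)$ via
\[
\widetilde B_i := \int_{U(m)}(I_H\otimes u)\,B_i\,(I_H\otimes u^*)\,du,
\]
one obtains an element of the commutant of $\C I_H\otimes M_m(\C)$ inside $\mathscr{L}(H)\otimes M_m(\C)$, which equals $\mathscr{L}(H)\otimes\C I_m$; hence $\widetilde B_i=A_i\otimes I_m$ for some $A_i\in\mathscr{R}_1^+(H)$. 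Monotonicity, the contraction bound, finite rank, and weak convergence to $I$ all pass through the averaging by linearity of the integral. Since $\tau\otimes I_m$ commutes with each $I_H\otimes u$, the commutator norm satisfies $|[\widetilde B_i,\tau\otimes I_m]|_\Phi\le |[B_i,\tau\otimes I_m]|_\Phi$ by the triangle inequality for the Haar integral, and combining with Theorem \ref{basics}(1) forces equality in the limit.

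For (2), the same scheme is carried out with the compact group $\T^m$ acting on $H^{\oplus m}$ by $D(z)=\diag(z_1 I_H,\ldots,z_m I_H)$. Averaging the sequence $B_i$ from Theorem \ref{basics}(2) for $\tau^{(1)}\oplus\cdots\oplus\tau^{(m)}$ against $D(z)$ annihilates the off-diagonal blocks (the $(j,k)$-block is multiplied by $z_j\bar z_k$, whose Haar integral is $\delta_{jk}$), producing a block-diagonal $A_i\in\mathscr{R}_1^+(H)^{\oplus m}$; since $D(z)$ commutes with $\tau^{(1)}\oplus\cdots\oplus\tau^{(m)}$, no commutator norm is lost, and the argument concludes as in (1). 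Part (3) then follows from (2): choose a block-diagonal approximating sequence $A_i=A_i^{(1)}\oplus\cdots\oplus A_i^{(m)}$ as provided by (2), and observe that each scalar tuple $\lambda^{(j)}$ commutes block-wise with $A_i^{(j)}$, so the commutator with the shifted tuple coincides with that of the unshifted one; the reverse inequality is obtained by applying the same argument with each $\lambda^{(j)}$ replaced by $-\lambda^{(j)}$.

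The main technical point to verify carefully is the commutant identification in (1), namely that averaging over $U(m)$ really forces the operator into pure tensor form $A_i\otimes I_m$ (rather than merely commuting with $\C I_H\otimes M_m(\C)$ in some weaker sense). Once this is in place, the monotonicity, positivity, and weak convergence properties survive the averaging essentially by linearity of the integral, and part (3) is an immediate corollary of (2).
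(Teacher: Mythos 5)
Your proof is correct and is precisely the averaging trick the paper invokes by reference to Voiculescu's Proposition 1.6 in \cite{V79}: average a minimizing sequence from Theorem \ref{basics}(2) over a compact group commuting with the tuple ($U(m)$ acting as $I_H\otimes u$ for (1), $\T^m$ acting block-diagonally for (2)), note that unitary invariance of $|\cdot|_\Phi$ plus the integral triangle inequality can only decrease the commutator norm, identify the fixed-point algebra to land in the desired subclass, and then use Theorem \ref{basics}(1) to close the gap; part (3) is the observation that a block-diagonal $A_i$ from (2) commutes with each scalar tuple $\lambda^{(j)}$. Since the paper gives no independent argument beyond this citation, your write-up matches its intended proof.
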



\section{Main result}
Our basic references for self-similar sets are \cite{Fal} and \cite{Ki}. 
Throughout this section, we treat $\R^n$ as a metric space equipped with the Euclidean metric. 
For $X\subset \R^n$, we denote by $\dim_HX$ the Hausdorff dimension of $X$. 
We denote by $O(n)$ the real orthogonal group. 

We say that a map $F:\R^n\to \R^n$ is a contraction if there exists a constant $0<\lambda<1$ such that for all $x,y\in \R^n$ the inequality 
$$|F(x)-F(y)|\leq \lambda |x-y|$$
holds. 
If moreover equality holds for all $x,y\in \R^n$, we say that $F$ is a \textit{similitude} and $\lambda$ is the \textit{contraction ratio} of $F$.

Let $F_1, F_2, \ldots , F_m$ be contractions of $\mathbb{R}^n$. 
Then it is known that there exists a unique compact set $K \subset \mathbb{R}^n$ satisfying 
\[
K = \bigcup_{j = 1}^m F_j(K),
\]
which we call the self-similar set for $\{F_j\}_{j=1}^m$ (\cite[Theorem 8.3]{Fal}, \cite[Theorem 1.1.4]{Ki}). 

We say that the open set condition holds for $\{F_j\}_{j = 1}^m$ if there exists a bounded non-empty open set $V\subset \R^n$ such that 
\[
\coprod_{j = 1}^m F_j(V) \subset V
\]
with this union disjoint. 

Assume that similitudes $\{F_j\}_{j = 1}^m$ satisfy the open set condition, 
and let $0 < \lambda_j < 1$ be the contraction ratio of $F_j$ for $j = 1, 2, \ldots , m$. 
It follows from \cite[Theorem 8.6]{Fal},\cite[Corollary 1.5.9]{Ki} that the Hausdorff dimension of $K$ is the number $p > 0$ determined by 
$$\sum_{j = 1}^m \lambda_j^p = 1,$$
and the Hausdorff measure $\cH_p(K)$ of $K$ is a non-zero finite number.  
Let $\cW_l=\{1,2,\ldots,m\}^l$, and let $\cW=\bigcup_{l=0}^\infty \cW_l$. 
For $w\in \cW$, we denote its word length by $|w|$. 
For $w \in \cW_l$, we define $F_w = F_{w_1}\circ F_{w_2}\circ\cdots\circ F_{w_l}$, $K_w = F_w(K)$, and 
$$\lambda_w=\lambda_{w_1}\lambda_{w_2}\cdots \lambda_{w_l}.$$
Then we have 
\[
\cH_p(K_w) = \lambda_{w}^p \cH_p(K).
\]

Now we consider the following 4 conditions for self-similar sets. 

\begin{itemize}
\item[(A1)]  Let $F_1, F_2, \ldots , F_m$ be similitudes in $\mathbb{R}^n$, and let $K$ be the corresponding self-similar set. 
For $j = 1, 2, \ldots , m$, we can express $F_j$ as 
    \[
    F_j(x) = \lambda_j U_j(x) + b_j ,
    \]
    where $0 < \lambda_j < 1$ , $U_j$ is an orthogonal matrix in $O(n)$, and $b_j \in \mathbb{R}^n$. 
We assume that the similitudes $F_1, F_2, \ldots , F_m$ satisfy the open set condition, and the Hausdorff dimension $p$ of $K$ 
is strictly larger than $1$. 
\item[(A2)] In addition to (A1), we assume $\lambda_1=\lambda_2=\cdots =\lambda_m$. 
\item[(A3)] In addition to (A2), we assume $U_1=U_2=\cdots= U_m=I_m$. 
\item[(A4)] In addition to (A3), we assume $F_i(K) \cap F_j(K)=\emptyset$ for any $i\neq j$.
\end{itemize}

Voiculescu showed the following theorem in his recent work: 

\begin{thm}[{\cite[Theorem 5.1]{V2021}}] \label{A4}
Let $\tau$ be an n-tuple of commuting Hermitian operators whose spectrum $\sigma(\tau)$ 
is a subset of a self-similar set $K$ satisfying (A4). 
Then there exists a constant $0<\gamma_K<\infty$, depending only on $K$, 
satisfying 
$$k_p^-(\tau)^p=\gamma_K\int_K\fm(x)d\cH_p(x),$$
where $\fm$ is the multiplicity function of the $\cH_p$-absolutely continuous part of $\tau$.  
\end{thm}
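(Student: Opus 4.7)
The plan is to treat $\mu(\tau):=k_p^-(\tau)^p$ as a measure-like functional on commuting hermitian $n$-tuples with spectrum in $K$, and to use three structural properties, together with the self-similarity of $K$ afforded by (A4), to pin it down up to a multiplicative constant. The three properties are: \emph{scaling}, $\mu(\lambda\tau+b)=\lambda^p\mu(\tau)$ for constant tuples $b\in(\C I)^n$, immediate from $[A,\lambda T+bI]=\lambda[A,T]$; \emph{multiplicity}, $\mu(\tau\otimes I_m)=m\,\mu(\tau)$, which follows from the singular-value asymptotic $|T\otimes I_m|_p^{-}\sim m^{1/p}|T|_p^{-}$ combined with Theorem \ref{averaging}(1); and the key \emph{disjoint additivity}
\[
\mu(\tau^{(1)}\oplus\tau^{(2)})=\mu(\tau^{(1)})+\mu(\tau^{(2)}) \quad \text{when } \sigma(\tau^{(1)})\cap\sigma(\tau^{(2)})=\emptyset.
\]

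Given these, the self-similar recursion proceeds as follows. By the disjointness in (A4), at each level $l$ the cylinders $\{K_w\}_{w\in\cW_l}$ are pairwise disjoint compact sets with union $K$, so the joint spectral projections yield $\tau=\bigoplus_{w\in\cW_l}\tau_w$ with $\sigma(\tau_w)\subset K_w$. Since the orthogonal parts $U_j$ are all trivial, each $\tau_w$ factors as $\lambda^l\sigma_w+b_w$ with $\sigma(\sigma_w)\subset K$, and scaling together with disjoint additivity gives
\[
\mu(\tau)=\lambda^{lp}\sum_{w\in\cW_l}\mu(\sigma_w).
\]
Applied to the canonical multiplication model $\tau_K$ on $L^2(K,\cH_p)$ (for which $\fm\equiv 1$), this yields $c=m^l\lambda^{lp}c$ for $c:=\mu(\tau_K)$, consistent precisely because $m\lambda^p=1$, and we define $\gamma_K:=c/\cH_p(K)$. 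Iterating shows $\mu(\tau_{K_w})=\gamma_K\cH_p(K_w)$ for every cylinder, and combining this with multiplicity and additivity over finite disjoint unions of cylinders yields $\mu(\tau)=\gamma_K\int_K\fm\,d\cH_p$ whenever $\fm$ is a cellular step function. Monotone approximation of $\fm\in L^1(\cH_p)$ by such step functions, together with an outer-regularity covering argument showing that the $\cH_p$-singular part contributes zero, then extends the formula to arbitrary $\tau$.

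The principal obstacle is the sharp disjoint additivity, since Theorem \ref{basics}(3) only supplies the subadditive bound $k_p^-(\tau^{(1)}\oplus\tau^{(2)})\le k_p^-(\tau^{(1)})+k_p^-(\tau^{(2)})$, which at the $p$-th power level is strictly weaker than needed. Establishing equality requires constructing near-optimal approximating projections in $\mathscr{R}_1^+(H_1\oplus H_2)$ that act asymptotically optimally on each summand; the standard device is the averaging trick of Theorem \ref{averaging}(2) together with a continuous separating function on $\sigma(\tau^{(1)})\sqcup\sigma(\tau^{(2)})$, combined with the fact that the $(p,1)$-Lorentz $p$-th power $|\,\cdot\,|_p^{-p}$ distributes additively over operators whose singular-value contributions come from spectrally disjoint pieces. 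A secondary technical hurdle is converting the set-theoretic cover of an $\cH_p$-null set into an operator-level projection that is near-optimal in the Lorentz norm, which again leans on the same additivity.
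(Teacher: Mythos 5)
Your plan hinges on a ``disjoint additivity'' property,
\[
k_p^-(\tau^{(1)}\oplus\tau^{(2)})^p = k_p^-(\tau^{(1)})^p + k_p^-(\tau^{(2)})^p \quad\text{when } \sigma(\tau^{(1)})\cap\sigma(\tau^{(2)})=\emptyset,
\]
which you correctly flag as the principal obstacle, but this equality is not a fact you can establish by the route you suggest: it is an \emph{open problem}, explicitly posed in \cite[Remark 6.1]{V2021} and discussed at the end of the present paper, where the authors even exhibit a counterexample to the naive finite-rank-sequence reduction that one would first try. Your proposed fix---averaging plus a separating function plus the assertion that $|\cdot|_p^{-p}$ ``distributes additively over spectrally disjoint pieces''---does not hold: the $(p,1)$-Lorentz norm weights singular values by $j^{-1+1/p}$ with a \emph{global} ordering, so when you interlace the singular values of $[A_1,\tau^{(1)}]$ and $[A_2,\tau^{(2)}]$ the Lorentz weight of each piece changes, and equality of $p$-th powers is exactly what one cannot extract. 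Theorem \ref{basics}(3) only yields the subadditive estimate, which at the $p$-th power level is strictly weaker. Your proof therefore does not close.

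What \cite{V2021} (and the present paper in Theorem \ref{tinequality}) actually do is avoid disjoint additivity entirely by exploiting the self-similarity of $K$ much more aggressively. The only ``additivity'' input used is the \emph{ampliation homogeneity} $k_p^-(\tau\otimes I_m)=m^{1/p}k_p^-(\tau)$ from \cite[Theorem 3.1]{V2021}, which is a statement about $m$ copies of the \emph{same} tuple and is provable, unlike the general disjoint-spectrum case. Two devices replace additivity. First, in the multiplicity-one case $\tau=\tau_X$ with $X=\bigcup_j K_{w_j}$ a union of virtually disjoint cylinders, the direct sum $\bigoplus_j\tau_{K_{w_j}}$ is \emph{automatically} a single multiplication operator (it is $\tau_X$), and since each $\tau_{K_{w_j}}$ is an affine image of $\tau_K$, after stripping translations and rotations the whole thing becomes $\tau_K\otimes\diag(r_j)$, to which ampliation homogeneity (via Lemma \ref{tensor diagonal}) applies. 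Second, to handle finite multiplicity $\tau=\bigoplus_j\tau_{Y_j}$, the sets $Y_j$ are first pushed by composite similitudes $F_{w_j}$ with a \emph{common} contraction ratio into pairwise virtually disjoint pieces of $K$, turning the direct sum into a single multiplicity-one operator $\tau_Y$ at the cost of a uniform scaling factor that cancels; this is the crucial step that sidesteps having to add $k_p^-$ over unrelated summands. Your ``cellular step function'' argument instead sums over $\tau_{X_k}\otimes I_k$ for distinct sets $X_k$, which cannot be reduced to ampliation homogeneity alone and genuinely needs the open additivity.

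Two smaller issues: the fixed-point identity $c=m^l\lambda^{lp}c$ does not determine $c$ (it holds for any $c$ once $m\lambda^p=1$); establishing $0<\gamma_K<\infty$ requires the separate inputs of Lemma \ref{non-zero k_p^-} and Lemma \ref{upper}, which you do not address. And your one-line justification of ampliation homogeneity glosses over the hard direction $k_p^-(\tau\otimes I_m)\geq m^{1/p}k_p^-(\tau)$, which is the main technical content of \cite[Theorem 3.1]{V2021}; the averaging trick only gives $\le$.
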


\begin{rem}\label{Sierpinski} As we can see from our arguments below, the same statement holds even if the assumption (A4) is relaxed to (A3). 
For example, the statement is true for the Sierpinski gasket and the Sierpinski carpet.  
\end{rem}

The following is our main theorem, where we assume only (A1), but our conclusion requires two constants. 

\begin{thm}\label{main} 
Let $\tau \in \mathscr{L}(H)^n$ be an $n$-tauple of commuting hermitian operators whose spectrum $\sigma(\tau)$ 
is a subset of a self-similar set $K$ satisfying (A1). 
Then there exist two positive constants $C_1, C_2$ determined by only $F_1, F_2, \ldots , F_m$ such that 
    \[
    C_1\int_K \fm\ d \cH_p \le k_p^-(\tau)^p \le C_2 \int_K \fm\ d \cH_p,
    \]
    where $\fm$ is the multiplicity function of the $\cH_p$-absolutely continuous part of $\tau$ 
\end{thm}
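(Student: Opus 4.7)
The plan is to exploit the self-similar structure of $K$ by decomposing $\tau$ along the partition $K=\bigcup_{w\in\cW_l}K_w$ at depth $l$, and to track how $k_p^-$ transforms under the similitudes $F_w$. The balancing identity $\sum_j \lambda_j^p=1$ then converts the decomposition into the integral against $\cH_p$. The first step is a scaling lemma: for a similitude $F(x)=\lambda Ux+b$ on $\R^n$ and any commuting hermitian $n$-tuple $\tau$,
\[
c_n\lambda\,k_p^-(\tau)\ \le\ k_p^-(\lambda U\tau+b)\ \le\ C_n\lambda\,k_p^-(\tau),
\]
with constants depending only on $n$ and equal to $1$ when $U=I$. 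Translations are invisible to commutators, multiplication by $\lambda$ is exactly homogeneous in $|\cdot|_p^-$, and the action of $U$ mixes the $n$ components of the tuple linearly, introducing only a dimension-dependent factor. This is the source of the gap between $C_1$ and $C_2$, and the reason equality is recovered in the $U_j=I$ case of Remark~\ref{Sierpinski}.

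Next, I would use the open set condition to obtain $\cH_p\bigl(F_i(K)\cap F_j(K)\bigr)=0$ for $i\neq j$ (standard, see \cite{Fal}, \cite{Ki}). It follows that the $\cH_p$-absolutely continuous part of the joint spectral measure of $\tau$ splits essentially disjointly among the $K_w$, while the $\cH_p$-singular part can be shown to make no contribution to $k_p^-$ by a quasicentral diagonalization argument akin to the one in \cite[Section~3]{V79}. Hence $\tau$ may be replaced by $\bigoplus_{w\in\cW_l}\tau_w$ with $\sigma(\tau_w)\subset K_w$ and $\tau_w=F_w(\widetilde\tau_w)$ for a commuting hermitian tuple $\widetilde\tau_w$ with multiplicity $\fm\circ F_w|_K$. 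The scaling lemma then yields $k_p^-(\tau_w)\asymp \lambda_w\,k_p^-(\widetilde\tau_w)$, and the change of variable $\int_K\fm\circ F_w\,d\cH_p=\lambda_w^{-p}\int_{K_w}\fm\,d\cH_p$ together with $\sum_w\lambda_w^p=1$ will be the engine that reassembles the $\cH_p$-integral over $K$.

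For the upper bound, Theorem~\ref{basics}(3) gives $k_p^-(\tau)\le\sum_w k_p^-(\tau_w)\le C_n\sum_w\lambda_w\,k_p^-(\widetilde\tau_w)$, and I would control each $k_p^-(\widetilde\tau_w)^p$ by a multiple of $\int_K\fm\circ F_w\,d\cH_p$ via a Vitali-type covering of $K$ at a sufficiently fine scale, in the spirit of the proof of \cite[Theorem~4.5]{V79}; summation then yields the upper bound $C_2\int_K\fm\,d\cH_p$. For the lower bound I need a $p$-th-power super-additivity
\[
k_p^-\Bigl(\bigoplus_{w}\tau_w\Bigr)^p\ \ge\ c\sum_w k_p^-(\tau_w)^p,
\]
which vastly exceeds the $\max_w$ bound supplied by Theorem~\ref{basics}(3); extracting this is the main obstacle. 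Following \cite[Theorem~5.1]{V2021}, I would invoke the quantitative form of OSC due to Moran \cite{Mo} to show that at sufficient depth $l$ the pieces $K_w$ become separated by distances comparable to their diameters, and then use a perturbation-of-spectra argument to reduce to the strictly disjoint case (A4). The universal loss incurred in this reduction is precisely what forces $C_1<C_2$ in general and is avoided under (A3).
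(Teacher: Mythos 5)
Your lower-bound step is where the argument breaks down. You posit a $p$-th-power super-additivity
\[
k_p^-\Bigl(\bigoplus_{w}\tau_w\Bigr)^p\ \ge\ c\sum_w k_p^-(\tau_w)^p
\]
for general summands $\tau_w$, but no such estimate is available. Indeed, whether even the exact additivity $\widetilde{k_p^-}(\tau_1\oplus\tau_2)^p=\widetilde{k_p^-}(\tau_1)^p+\widetilde{k_p^-}(\tau_2)^p$ holds is recorded as an open question at the end of the paper (cf.\ \cite[Remark 6.1]{V2021}), so you cannot invoke it as a black box. The paper sidesteps this entirely: after removing the $\cH_p$-singular part (Lemma~\ref{singular vanishing}), reducing to finite multiplicity, and pushing the multiplicity sheets into virtually disjoint copies using the self-similar dynamics, the operator becomes unitarily equivalent (modulo a $k_p^-$-null summand) to $\tau_K\otimes\diag(r_j)_{j=1}^l$, a tensor of a \emph{single} model operator with a diagonal matrix. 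For that special shape the exact ampliation homogeneity $k_p^-(\tau\otimes I_m)=m^{1/p}k_p^-(\tau)$ from \cite[Theorem 3.1]{V2021} applies, and Lemma~\ref{tensor diagonal} turns the diagonal weights into multiplicative constants $\lambda_\ast^{\pm 1}$. No super-additivity over genuinely distinct direct summands is needed or used.

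Two further misdiagnoses compound this. First, the gap between $C_1$ and $C_2$ does not originate in the orthogonal parts $U_j$: those are handled \emph{exactly} by working with the modified modulus $\widetilde{k_\Phi}$ and Lemma~\ref{solution for O(n)}, which gives equality, not a dimension-dependent loss. The gap comes from the unequal contraction ratios $\lambda_j$ and is visible in Lemma~\ref{tensor diagonal}; that is precisely why Corollary~\ref{tequality} recovers equality under (A2) ($\lambda_1=\cdots=\lambda_m$), not under the weaker hypothesis $U_j=I$. Second, your plan to use Moran's result to get pieces $K_w$ separated by distances comparable to their diameters and thereby reduce to the strictly disjoint case (A4) cannot work: under OSC the cylinders can touch at every scale (Sierpinski gasket, Sierpinski carpet), so no such uniform separation exists. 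Moran's theorem \cite{Mo} is used in the paper only to show the overlaps $F_i(K)\cap F_j(K)$ have Hausdorff dimension strictly less than $p$, which makes them $k_p^-$-negligible via Lemma~\ref{upper}; it does not geometrically separate the pieces.
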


We prove Theorem \ref{main} in several steps following Voiculecu's strategy in \cite{V2021} with necessary modifications. 

In the sequel, we assume that $K$ and $\tau$ satisfy the assumption of Theorem \ref{main}. 
We denote by $E(\tau;\cdot)$ the the spectral measure of $\tau$. 
We denote by $\diam(K_w)$ the diameter of $K_w$. 
We set $\lambda_* = \min\{\lambda_j\}_{j=1}^m$. 
We say that two Borel subsets $A$ and $B$ of $K$ are \textit{virtually disjoint} if $\dim_H A\cap B<p$.  
For virtually disjoint $A$ and $B$, we have $\cH_p(A\cap B)=0$.

The first step immediately follows from {\cite[Corollary 4.7]{DV}}. 

\begin{lem}
\label{non-zero k_p^-}
We denote by $\tau_K$ the n-tuple of multiplication operators of coordinate functions in $\mathbb{R}^n$ acting on $L^2(K, \cH_p)$. 
Then we have $k_p^-(\tau_K) > 0$.
\end{lem}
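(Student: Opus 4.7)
The strategy is to apply \cite[Corollary 4.7]{DV} directly; the only task is to verify its hypotheses for $\tau_K$. That corollary should be read as the $p$-dimensional counterpart of Theorem \ref{thm kn- estimate}, asserting, in the form needed here, that if an $n$-tuple $\tau$ of commuting hermitian operators has a non-trivial $\cH_p$-absolutely continuous part in its spectral decomposition, then $k_p^{-}(\tau)>0$. So the plan reduces to showing that the $\cH_p$-absolutely continuous part of $\tau_K$ is non-trivial and then quoting the corollary.

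First I would identify the spectral data of $\tau_K$. Since the operators act by multiplication by the coordinate functions on $L^2(K,\cH_p)$, the constant function $\mathbf{1}_K$ is cyclic for $\tau_K$, the scalar spectral measure of $\tau_K$ is exactly $\cH_p|_K$, and the multiplicity is uniformly one on $K$. In particular $\tau_K$ coincides with its own $\cH_p$-absolutely continuous part, and the multiplicity function $\fm$ in the statement of the lemma is identically $1$ on $K$.

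Second, because $\{F_j\}_{j=1}^m$ satisfies the open set condition of (A1) and $p=\dim_H K$, the facts recalled at the start of Section 3 (via \cite[Theorem 8.6]{Fal} and \cite[Corollary 1.5.9]{Ki}) guarantee that $\cH_p(K)$ is a strictly positive, finite number. Therefore $\int_K \fm\, d\cH_p=\cH_p(K)>0$, and combining this with \cite[Corollary 4.7]{DV} yields $k_p^{-}(\tau_K)>0$.

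The only step that is not purely mechanical is the first one: confirming that \cite[Corollary 4.7]{DV} is stated in a form flexible enough to accommodate $\cH_p$-absolutely continuous spectral measures on an arbitrary self-similar compact satisfying (A1), rather than only the Lebesgue setting of Theorem \ref{thm kn- estimate}. Since the strict inequality $p>1$ is built into (A1), this is a matter of matching notation to the reference rather than proving anything new, so I do not expect a genuine obstacle.
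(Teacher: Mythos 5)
Your proposal takes essentially the same route as the paper: the paper's entire proof of this lemma is the single sentence that it ``immediately follows from \cite[Corollary 4.7]{DV},'' and you elaborate on why the citation applies (the scalar spectral measure of $\tau_K$ is $\cH_p|_K$, which is nonzero and finite because $K$ satisfies the open set condition and has Hausdorff dimension $p>1$).

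One small caution on the paraphrase of the cited result: the hypothesis of \cite[Corollary 4.7]{DV} is not merely that the spectral measure has a nontrivial $\cH_p$-absolutely continuous part. In the David--Voiculescu framework the lower bound is obtained under an upper $p$-regularity assumption on the measure, i.e.\ a bound of the form $\mu(B(x,r)) \le M\, r^p$. Absolute continuity with respect to $\cH_p$ alone would not suffice in general, since $\cH_p$ restricted to an irregular Borel set need not satisfy this. What saves the application here is that for a self-similar set $K$ satisfying the open set condition with $\dim_H K = p$, the restricted Hausdorff measure $\cH_p|_K$ \emph{is} Ahlfors upper $p$-regular; this is a classical consequence of the open set condition and the fact that $0 < \cH_p(K) < \infty$. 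So the verification you flag as ``not purely mechanical'' is exactly this regularity check, not a matter of whether the corollary accommodates $\cH_p$-absolute continuity in the abstract. The conclusion and the strategy are correct; just be aware that the precise input to \cite[Corollary 4.7]{DV} is the regularity of $\cH_p|_K$, which (A1) guarantees.
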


Next we prove \cite[Lemma 5.1]{V2021} under the assumption of (A1). 

\begin{lem}\label{upper} Assume that $\tau$ has a cyclic vector $\xi$. 
Then for some constant $C>0$ depending only on $K$, 
$$k_p^{-}(\tau)\leq C\cH_p(\sigma(\tau))^{1/p}.$$
\end{lem}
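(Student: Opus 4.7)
The strategy is to realize $\tau$ on $L^2(\sigma(\tau),\mu)$ via the cyclic vector, construct an explicit net of finite-rank projections $A_\delta$ indexed by a Moran scale $\delta>0$, and show $A_\delta\to I$ strongly with $|[A_\delta,T_j]|_p^-$ controlled uniformly by $\cH_p(\sigma(\tau))^{1/p}$. Concretely, fix $\delta>0$ and let $\cW_\delta$ consist of the words $w\in\cW$ with $\lambda_w\le\delta$ but $\lambda_{w'}>\delta$ for every proper prefix $w'$; this yields a Moran covering $K=\bigcup_{w\in\cW_\delta}K_w$ with $\lambda_w\in(\lambda_*\delta,\delta]$, $\diam(K_w)\le\delta\,\diam(K)$, and pairwise $\cH_p$-null intersections (open set condition). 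Disjointify to a Borel partition $\{B_w\}_{w\in\cW_\delta}$ of $K$, and for each $w$ with $\mu(B_w)>0$ set $\xi_w=\chi_{B_w}/\sqrt{\mu(B_w)}$; define
\[
A_\delta=\sum_w |\xi_w\rangle\langle\xi_w|.
\]
The $\xi_w$ are orthonormal, so $A_\delta\in\mathscr{R}_1^+(H)$ is a finite-rank projection; in the $L^2$ picture it is the conditional expectation onto simple functions adapted to $\{B_w\}$, and as $\delta\to 0$ the partitions refine to the Borel structure, so $A_\delta\to I$ strongly by martingale convergence.

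A direct calculation produces
\[
[A_\delta,T_j]=\sum_w\bigl(|\xi_w\rangle\langle\psi_{w,j}|-|\psi_{w,j}\rangle\langle\xi_w|\bigr),\qquad \psi_{w,j}=(T_j-c_{w,j})\xi_w,\ c_{w,j}=\Ipd{T_j\xi_w}{\xi_w}.
\]
Since $\xi_w\in E(\tau;B_w)H$ and $T_j$ commutes with $E(\tau;B_w)$, we have $\psi_{w,j}\in E(\tau;B_w)H$ with $\psi_{w,j}\perp\xi_w$; the spectral subspaces for distinct $B_w$ being pairwise orthogonal, $[A_\delta,T_j]$ is block-diagonal on mutually orthogonal $2$-dimensional subspaces. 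Its singular values are thus the pairs $(\Nrm{\psi_{w,j}},\Nrm{\psi_{w,j}})$, with $\Nrm{\psi_{w,j}}\le\diam(B_w)\le\delta\,\diam(K)$.

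The main obstacle is the counting step. Let $N_\delta$ be the number of $w$ with $\mu(B_w)>0$; each such $K_w$ meets $\sigma(\tau)$. Using $\cH_p(K_w)=\lambda_w^p\cH_p(K)$ together with the open set condition gives $\sum_w\lambda_w^p\cH_p(K)=\cH_p(\bigcup_w K_w)$; outer regularity of $\cH_p|_K$ and compactness of $\sigma(\tau)$ allow, for any $\varepsilon>0$ and all sufficiently small $\delta$, confinement of the relevant $K_w$ to a preselected open neighbourhood of $\sigma(\tau)$ of $\cH_p$-excess $<\varepsilon$, so $\sum\lambda_w^p\le(\cH_p(\sigma(\tau))+\varepsilon)/\cH_p(K)$. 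Combined with $\lambda_w>\lambda_*\delta$ this gives $N_\delta\le(\lambda_*\delta)^{-p}(\cH_p(\sigma(\tau))+\varepsilon)/\cH_p(K)$, and the elementary estimate $\sum_{k=1}^{N}k^{-(1-1/p)}\le(p+1)N^{1/p}$ yields
\[
|[A_\delta,T_j]|_p^-\le 2\delta\,\diam(K)\cdot (p+1)N_\delta^{1/p}\le C(\cH_p(\sigma(\tau))+\varepsilon)^{1/p}
\]
with $C$ depending only on $K$. Applying Theorem \ref{basics}(1), then letting $\delta\to 0$ and $\varepsilon\to 0$, completes the proof.
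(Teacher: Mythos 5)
Your proof is correct, and it takes a genuinely different (and somewhat cleaner) route to the same rank-$|\Omega|$ projection estimate. The paper's proof begins with a preliminary reduction: it peels off the spectral parts of $\tau$ supported on the overlap sets $F_j(K)\cap F_k(K)$ and their images $F_w(L_i)$, using Mor\'an's dimension-drop theorem ($\dim_H L_i<p$) together with Xia's result that $k_r=0$ for $r<p$ to show those summands have vanishing $\widetilde{k_p^-}$. Only after this reduction are the spectral projections $E(\tau;K_w)$, $w\in\Omega(r)$, mutually orthogonal, and the paper then uses the single unit vectors $E_w\xi/\Nrm{E_w\xi}$ to build $P_r$. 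You sidestep this reduction entirely by disjointifying the Moran cells into a Borel partition $\{B_w\}$ of $\sigma(\tau)$ and taking $\xi_w=\chi_{B_w}/\sqrt{\mu(B_w)}$ in the $L^2(\sigma(\tau),\mu)$ picture supplied by cyclicity; orthogonality is then automatic, and the counting step only needs the standard fact (from the open set condition) that distinct Moran cells have $\cH_p$-null overlap, not Mor\'an's theorem. In the end both proofs project onto one unit vector per Moran cell, but you reach that point without the auxiliary decomposition. What the paper's route buys is that the reduction step is reused verbatim in Lemma \ref{singular vanishing}, so the overlap-stripping argument is not wasted work in the larger scheme.

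One small point to tighten: your appeal to ``martingale convergence'' for $A_\delta\to I$ implicitly requires the partitions $\{B_w\}_{w\in\cW_\delta}$ to be nested as $\delta$ decreases, which your ad hoc disjointification does not automatically give (a prefix-coding choice fixes it). But this is actually unnecessary: since $\diam(B_w)\le\diam(K_w)\le\delta\,\diam(K)\to 0$ uniformly, and $A_\delta f$ is the average of $f$ over each cell, one has $\Nrm{A_\delta f-f}\to 0$ directly for $f$ continuous on $\sigma(\tau)$, hence for all $f\in L^2$ by density and $\Nrm{A_\delta}\le 1$. So strong convergence follows without any nesting hypothesis, and the proof stands.
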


\begin{proof} 
We enumerate the family of sets $F_j(K)\cap F_k(K)$ for $j\neq k$, and denote them by $L_1,L_2,\ldots,L_{N}$ (some of them can be empty). 
We further enumerate the family of the sets $F_w(L_i)$ for $|w|\geq 1$ and $1\leq i\leq N$, and denote them by $L_{N+1},L_{N+2},\cdots$. 
Note that we have $\dim_H L_i<p$ thanks to \cite[Theorem 3.3]{Mo}. 

We set $H_1=E(\tau; L_1)H$, and inductively define 
$$H_i=E(\tau; L_i)H\cap (\bigcup_{j=1}^{i-1}H_j)^\perp$$ for $i\geq 2$, and 
$$H_0=\left(\bigcup_{i=1}^\infty H_i\right)^\perp.$$
Then $\tau$ is decomposed as 
$$(\tau,H)=\bigoplus_{i=0}^\infty (\tau_i,H_i).$$ 

We claim $k_p^-(\tau_i)=0$ for all $i\geq 1$. 
Indeed, note that we have $\sigma(\tau_i)\subset L_i$ and $\dim_H L_i<p$. 
We choose $r_i>1$ satisfying $\dim_H L_i<r_i<p$. 
Then $\cH_{r_i}(L_i)=0$. 
Since $\tau_i$ is decomposed into its cyclcic components, we get $k_{r_i}(\tau)=0$ thanks to \cite[Theorem 6.1]{X} and  Theorem \ref{basics},(3). 
Since $\cC_{r_j}\subset C_p^-$, the claim follows from \cite[Corollary 2.6]{V79}.  

By Theorem \ref{basics},(3), the claim implies $k_p^-(\tau)=k_p^-(\tau_0)$. 
Therefore to show Lemma \ref{upper}, we may and do assume $\tau=\tau_0$ as we have $\cH_p(\sigma(\tau_0))\leq \cH_p(\sigma(\tau))$. 
 
For $r\in \N$, let $\Omega(r)$ be the set of all $w= (w_1, w_2, \ldots, w_l)\in \cW$ satisfying 
$$\lambda_w \le \frac{1}{r} < \lambda_{(w_1, w_2, \ldots, w_{l-1})}$$
and $\ K_w \cap \sigma(\tau) \ne \emptyset$,  and let  
   \[
   G(r) = \bigcup_{w \in \Omega(r)} K_w.
   \]
Then we have $\sigma(\tau) \subset G(r)$, and 
\[
\frac{\lambda_*}{r}\diam(K) \le \diam(K_{w}) < \frac{1}{r}\diam(K),
\]
for $w\in \Omega(r)$ by definition. 
Note that $K_{w_1}$ and $K_{w_2}$ are virtually disjoint for all $w_1 \neq w_2 \in \Omega(r)$, and 
$E(\tau;K_{w_1}) \perp E(\tau;K_{w_2})$.
For a given $\varepsilon > 0$, there exists $n_0 \in \mathbb{N}$ so that we have
   \[
   n_0 \le r \Rightarrow \cH_p(G(r)) \le \cH_p(\sigma(\tau)) + \varepsilon
   \]
because $\sigma(\tau) = \bigcap_r G(r)$. 
We denote $E_w = E(\tau; K_w)$. 
Then we have
   \[
   \sum_{w \in \Omega(r)} E_w = I.
   \]
    We also have
    \begin{equation*}
    |\Omega(r)| \le \frac{r^p}{\lambda_*^p\cH_p(K)} (\cH_p(\sigma(\tau)) + \varepsilon)
    \end{equation*}
for $r\geq n_0$ as $\sum_{w \in \Omega(r)} \cH_p(K_w) = \cH_p(G(r))$ and  if $w \in \Omega(r)$ we have
    \[
    \left(\frac{\lambda_*}{r}\right)^p \cH_p(K) \le \cH_p(K_w).
    \]

    Let $P_r$ be the orthogonal projection onto the linear span of $\{E_w \xi|\;w \in \Omega(r)\}$. 
We can express $P_r$ as
    \[
    P_r = \sum_{w \in \Omega(r)} e_w \otimes e_w^\ast,
    \]
    where $e_w =\frac{1}{\Nrm{E_w\xi}} E_w\xi$ if $E_w\xi\neq 0$ and $e_w=0$ if $E_w\xi=0$. 
Since $E_w$ commutes with $\tau$ and $P_r$, we have  
    \begin{equation*}
        \begin{split}
            \Nrm{\Com{P_r}{\tau}} &= \max_{w \in \Omega(r)} \Nrm{\Com{P_r}{\tau} E_w }\\
            &=\max_{1\leq i\leq n} \max_{w\in G(r)} \Nrm{e_w \otimes (T_i e_w)^\ast - (T_i e_w) \otimes e_w^\ast}.
        \end{split}
    \end{equation*}
Since $\diam(K_w)\leq \diam(K)/r$, we get $\Nrm{\Com{P_r}{\tau}} \le 2\diam(K)/r$. 

Since $\{P_r\}_{r=1}^\infty$ is an increasing sequence of projections, it strongly converges to a projection, say $P$. 
Since $P_r\xi=\xi$ for $r\in \N$ and $\lim_{r\to\infty}\|[P_r,\tau]\|=0$, we have $P\xi=\xi$ and $\Com{P}{\tau} = 0$. 
Thus we get $P=I$ as $\xi$ is cyclic for $\tau$. 

Since $\mathrm{rank}(P_r) \le |\Omega(r)|$,
\begin{align*}
|\Com{P_r}{\tau}|_p^- &\le \sum_{k = 1}^{2|\Omega(r)|} k^{-1 + 1/p} \Nrm{\Com{P_r}{\tau}} \le 
     \sum_{k = 1}^{2|\Omega(r)|} k^{-1 + 1/p} \frac{2\diam(K)}{r} \\
 &\le p(2|\Omega(r)|)^{1/p} \frac{2\diam(K)}{r} \le C (\cH_p(\sigma(\tau)) + \varepsilon)^{1/p},
\end{align*}
where $C = p \cdot 2^{1 + 1/p}\lambda_*^{-1} \cdot \cH_p(K)^{- 1/p}$. Using Theorem \ref{basics},(1), we get the statement.
\end{proof}

We can prove \cite[Lemma 5.2]{V2021} under the assumption (A1) now.  

\begin{lem}
\label{singular vanishing}
Assume that $\sigma(\tau)\subset K$ and the spectral measure of $\tau$ is singular with respect to the Hausdorff measure $\cH_p$. 
Then $k_p^-(\tau) = 0$.
\end{lem}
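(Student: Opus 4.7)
The plan is to exploit singularity to split $\tau$ as a direct sum of spectral pieces whose joint spectra lie in $\cH_p$-null closed sets, on which Lemma \ref{upper} forces $k_p^-$ to vanish; Theorem \ref{basics}(3) then propagates the vanishing to $\tau$ itself.

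To implement this, fix a maximal scalar spectral measure $\mu$ of $\tau$; by hypothesis $\mu \perp \cH_p$, so there is a Borel set $N \subset K$ with $\mu(N) = \mu(K)$ and $\cH_p(N) = 0$. Since $\mu$ is a finite Borel measure on the compact metric space $K$, inner regularity provides an increasing sequence of compact sets $C_m \subset N$ with $\mu(C_m) \nearrow \mu(K)$; automatically $\cH_p(C_m) = 0$ for every $m$. Setting $P_m := E(\tau; C_m)$ and $H_m := P_m H$, the projections $P_m$ increase strongly to $I$ (by maximality of $\mu$), each $H_m$ is $\tau$-invariant, and $\sigma(\tau|_{H_m}) \subset C_m$.

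To verify $k_p^-(\tau|_{H_m}) = 0$, I would decompose $\tau|_{H_m}$ into its cyclic components. Each cyclic piece has joint spectrum contained in the closed set $C_m$ of vanishing $\cH_p$-measure, so Lemma \ref{upper} gives $k_p^- \leq C \cH_p(C_m)^{1/p} = 0$ for each piece. Theorem \ref{basics}(3), applied to both finite and countable direct sums of these cyclic pieces, yields $k_p^-(\tau|_{H_m}) = 0$. Writing $\tau = \bigoplus_{m \geq 1} \tau|_{H_m \ominus H_{m-1}}$ with $H_0 := \{0\}$, and noting that the partial sums $\bigoplus_{m=1}^M \tau|_{H_m \ominus H_{m-1}}$ equal $\tau|_{H_M}$, Theorem \ref{basics}(3) delivers
\[
k_p^-(\tau) \;=\; \lim_{M \to \infty} k_p^-(\tau|_{H_M}) \;=\; 0.
\]

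The main subtlety is the cyclic decomposition used to invoke Lemma \ref{upper}, whose statement requires a cyclic vector; the singularity of $\mu$ is essential in allowing the $C_m$'s to be chosen closed with $\cH_p(C_m) = 0$, without which Lemma \ref{upper} would only yield the trivial bound $C \cH_p(K)^{1/p}$ that does not vanish in the limit.
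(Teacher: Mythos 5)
Your argument is correct and gives a self-contained proof. The paper itself is terse here: it reduces to $\tau=\tau_0$ (removing the overlap pieces $L_i$ exactly as in the proof of Lemma~\ref{upper}) and then defers to the proof of Lemma~5.2 in \cite{V2021}, which under (A4) exhausts the spectral measure by coverings with cylinder pieces $K_w$. Your route instead uses a maximal scalar spectral measure and inner regularity of finite Borel measures on the compact space $K$ to produce compact $\cH_p$-null sets $C_m$ exhausting the spectral measure, and then applies Lemma~\ref{upper} to the cyclic components of $\tau|_{E(\tau;C_m)H}$ (each of which has joint spectrum inside $C_m$, hence $\cH_p(\sigma(\cdot))=0$), with Theorem~\ref{basics}(3) handling the countable direct sums and the strong limit $P_m\nearrow I$. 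The two proofs use the same key ingredient---Lemma~\ref{upper} applied on $\cH_p$-null spectral slices---but your use of inner regularity sidesteps both the $\tau_0$ reduction and any reliance on the combinatorics of the $K_w$-coverings, which is a small simplification. One point worth stating explicitly when you write this up: that $P_m\to I$ strongly uses equivalence of $\mu$ and the spectral measure $E(\tau;\cdot)$ (i.e.\ maximality), which you correctly flag in parentheses; and that $\sigma(\tau|_{E(\tau;C_m)H})\subset C_m$ requires $C_m$ to be \emph{closed}, which is exactly why you choose the $C_m$ compact rather than merely Borel.
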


\begin{proof} As in the proof of the previous lemma, we may assume that $\tau=\tau_0$.  
Then we can prove the statement exactly in the same way as in the proof of \cite[Lemma 5.2]{V2021} by using Lemma \ref{upper}. 
\end{proof}

The main technical improvement in \cite{V2021} is the ampliation homogeneity \cite[Theorem 3.1]{V2021} showing 
$k_p^-(\tau\otimes I_m)=m^{\frac{1}{p}}k_p^-(\tau)$. 
To deal with the contraction rates $\lambda_j$ depending on $j$, we modify it as follows. 

\begin{lem}
\label{tensor diagonal}
Let $\Phi$ be a symmetric function, let $0 \le c_1 \le a_1, a_2, \ldots , a_m \le c_2$, and let $\tau \in \mathscr{L}(H)^n$. 
Then we have
    \[
    c_1 k_\Phi(\tau \otimes I_m) \le k_\Phi(\tau \otimes \diag(a_i)_{i = 1}^m) \le c_2 k_\Phi(\tau \otimes I_m). 
    \]
In paricular, 
  \[
    c_1m^{\frac{1}{p}} k_p^-(\tau) \le k_p^-((\tau \otimes \diag(a_i)_{i = 1}^m) \le c_2m^{\frac{1}{p}} k_p^-(\tau).    
    \]
    \end{lem}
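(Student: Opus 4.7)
The strategy is to pass to block-diagonal approximants via the averaging trick and then compare singular values. Identifying $H\otimes \C^m$ with $H^{\oplus m}$, we have $\tau\otimes I_m=\tau\oplus\cdots\oplus\tau$ and $\tau\otimes\diag(a_i)=a_1\tau\oplus\cdots\oplus a_m\tau$, so for any block-diagonal $A = A^{(1)}\oplus\cdots\oplus A^{(m)} \in \mathscr{R}_1^+(H)^{\oplus m}$ and each coordinate $T_k$ of $\tau$ we have
\[
\Com{A}{T_k\otimes\diag(a_i)} = \bigoplus_{i=1}^m a_i \Com{A^{(i)}}{T_k},\qquad \Com{A}{T_k\otimes I_m} = \bigoplus_{i=1}^m \Com{A^{(i)}}{T_k}.
\]

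The first step is the elementary rearrangement fact: if two nonnegative sequences $(b_\alpha),(\tilde b_\alpha)$ on a common index set satisfy $c_1 b_\alpha\le \tilde b_\alpha\le c_2 b_\alpha$, then their decreasing rearrangements $(t_k),(s_k)$ satisfy $c_1 t_k\le s_k\le c_2 t_k$ for every $k$. Applied to $b_{(i,\ell)}=\mu_\ell(\Com{A^{(i)}}{T_k})$ and $\tilde b_{(i,\ell)}=a_i b_{(i,\ell)}$, and combined with the monotonicity of $|\cdot|_\Phi$ in singular values together with the max over coordinates of $\tau$, this yields the sandwich
\[
c_1 \NrmPhi{\Com{A}{\tau\otimes I_m}} \;\le\; \NrmPhi{\Com{A}{\tau\otimes\diag(a_i)}} \;\le\; c_2 \NrmPhi{\Com{A}{\tau\otimes I_m}}
\]
valid for every block-diagonal $A\in \mathscr{R}_1^+(H)^{\oplus m}$.

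The second step is to invoke Theorem \ref{averaging},(2) twice. For the lower bound, apply it to $\tau\otimes\diag(a_i)=a_1\tau\oplus\cdots\oplus a_m\tau$ to obtain a block-diagonal sequence $\{A_i\}\to I$ with $\lim_i\NrmPhi{\Com{A_i}{\tau\otimes\diag(a_j)}}=k_\Phi(\tau\otimes\diag(a_i))$; the lower half of the sandwich combined with Theorem \ref{basics},(1) applied to $\tau\otimes I_m$ then yields $c_1 k_\Phi(\tau\otimes I_m)\le k_\Phi(\tau\otimes\diag(a_i))$. For the upper bound, apply the theorem instead to $\tau\otimes I_m=\tau\oplus\cdots\oplus\tau$ and use the upper half of the sandwich symmetrically, invoking Theorem \ref{basics},(1) for $\tau\otimes\diag(a_i)$. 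The ``in particular'' statement then follows by specializing to the $(p,1)$-Lorentz norm and invoking Voiculescu's ampliation homogeneity $k_p^-(\tau\otimes I_m)=m^{1/p}k_p^-(\tau)$ from \cite[Theorem 3.1]{V2021}.

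The only genuinely technical point is the termwise rearrangement fact at the beginning; everything else is bookkeeping around the block-diagonal averaging trick. The degenerate case $c_1=0$ is handled uniformly since the lower bound then becomes vacuous.
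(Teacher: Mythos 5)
Your proof is correct and follows essentially the same route as the paper: reduce to block-diagonal approximants via Theorem~\ref{averaging},(2), compare singular values of $\bigoplus_i a_i\Com{A^{(i)}}{T_k}$ with those of $\bigoplus_i\Com{A^{(i)}}{T_k}$, and close with Theorem~\ref{basics},(1) and ampliation homogeneity. The only cosmetic difference is that you obtain the norm comparison from a termwise rearrangement inequality on singular values, whereas the paper compares partial sums and invokes \cite[Theorem 1.16(b)]{S}; and you use averaging(2) for both directions while the paper uses averaging(1) for the upper bound---both work equally well.
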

\begin{proof}

    For $X_1, X_2, \ldots, X_m \in \mathscr{L}(H)$ and $a_1, a_2, \ldots, a_m, b_1, b_2, \ldots, b_m \in \mathbb{R}$ satisfying $0 \le a_j \le b_j$, we denote the singular values of $X_j$ by $\{s_i^{(j)}\}_{i = 1}^\infty$. If we define 
    $A = \bigoplus_{j = 1}^m a_j X_j$ and $B = \bigoplus_{j = 1}^m b_j X_j$, then the singular values of $A$ and $B$ are respectively $\{a_j s_i^{(j)}\}_{i, j}$ and $\{b_j s_i^{(j)}\}_{i, j}$. So for all $n\in \N$ we have
    \[
    \sum_{k = 1}^n \mu(A)_k = \sum_{k = 1}^n a_{j_k} s_{i_k}^{(j_k)} \le \sum_{k = 1}^n b_{j_k} s_{i_k}^{(j_k)} \le \sum_{k = 1}^n \mu(B)_k.
    \]
    Then it follows from \cite[Theorem 1.16 (b)]{S} that $\NrmPhi{A} \le \NrmPhi{B}$.

Thanks to Theorem \ref{averaging},(2), there exists an increasing sequence $\{A_i\}_{i=1}^\infty$ in $\mathscr{R}^+_1(H)^{\oplus m} \subset \mathscr{R}^+_1(H^{\oplus m})$
such that $\{A_i\}$ strongly converges to $I$ and 
    \[
    \lim_{i \to \infty} \NrmPhi{\Com{\tau \otimes \diag(a_j)}{A_i}} = k_\Phi(\tau \otimes \diag(a_j)).
    \]
Expressing $A_i$ as 
$$A_i = A_1^{(i)} \oplus A_2^{(i)} \oplus \cdots A_m^{(i)} \in \mathscr{R}^+_1(H)^{\oplus m},$$
we get  
    \begin{equation*}
        \begin{split}
            & c_1 \NrmPhi{\Com{\tau \otimes I_m}{A_i}} = \NrmPhi{\Com{\bigoplus_{j = 1}^m c_1 \tau}{\bigoplus_{j = 1}^mA_j^{(i)}}} = \NrmPhi{\bigoplus_{j = 1}^m c_1 \Com{\tau}{A_j^{(i)}}}\\
            &\le \NrmPhi{\bigoplus_{j = 1}^m a_j \Com{\tau}{A_j^{(i)}}} = \NrmPhi{\Com{\tau \otimes \diag(a_j)}{A_i}}.
        \end{split}
    \end{equation*}
    Then by Theorem \ref{basics},(1), we get 
    \[
    c_1 k_\Phi(\tau \otimes I_m) \le k_\Phi(\tau \otimes \diag(a_j)).
    \]
By Theorem \ref{averaging},(1), we can take an increasing sequence $\{B_i\} \in \mathscr{R}^+_1(H)$ strongly converges to $I$ and satisfying 
    \[
    \lim_{i \to \infty} \NrmPhi{\Com{\tau \otimes I_m}{B_i \otimes I_m}} = k_\Phi(\tau \otimes I_m).
    \]
Then we have
    \begin{equation*}
        \begin{split}
               &\NrmPhi{\Com{\tau \otimes \diag(a_j)}{B_i \otimes I_m}} = \NrmPhi{\bigoplus_{j = 1}^m a_j \Com{\tau}{B_i}}\\
               &\le \NrmPhi{\bigoplus_{j = 1}^m c_2 \Com{\tau}{B_i}} = c_2\NrmPhi{\Com{\tau \otimes I_m}{B_i \otimes I_m}} .
        \end{split}
    \end{equation*}
    Using Theorem \ref{basics},(1), we obtain
$$k_\Phi(\tau \otimes \diag(a_j)) \le c_2 k_\Phi(\tau \otimes I_m).\eqno \qedhere$$
\end{proof}

To deal with the rotation part in $F_j$, we use a modified quasicentral modulus, 
as suggested in \cite[Remark 6.2]{V2021}.
 
\begin{dfn}[{\cite[Theorem 1.7]{V79}}]
    For a symmetric function $\Phi$ and $\tau \in \mathscr{L}(H)^n$, we define $\widetilde{k_\Phi}(\tau)$ by 
    \[
    \widetilde{k_\Phi}(\tau) = \liminf_{A \in \mathscr{R}^+_1} \NrmPhib{\Com{A}{\tau}}, 
    \]
    where $\NrmPhib{\tau} = \NrmPhi{\left(\sum_{i = 1}^n T_i^\ast T_i\right)^{\frac{1}{2}}}$.
\end{dfn}
Let $e_1, e_2, \ldots, e_n$ be the orthonormal basis of $\mathbb{C}^n$ and define $d_\tau : H \to H \otimes \mathbb{C}^n$ by
\[
d_\tau(\xi) = \sum_{i = 1}^n T_i \xi \otimes e_i
\]
Then we have $d_\tau^\ast d_\tau = \sum_{i = 1}^n T_i^\ast T_i$ and $\NrmPhib{\tau} = \NrmPhi{d_\tau}$. 
Since 
$$|\tau|_\Phi\leq |d_\tau|_\Phi\leq n|\tau|_\Phi,$$
we have 
$$k_\Phi(\tau)\leq \widetilde{k_\Phi}(\tau)\leq n k_\Phi(\tau).$$
$\widetilde{k_\Phi}$ and $k_\Phi$ share similar properties such as Theorem \ref{basics}, Theorem \ref{averaging}, 
Lemma \ref{singular vanishing}, and Lemma \ref{tensor diagonal} etc. However, $\widetilde{k_\Phi}$ is distinguished from $k_\Phi$ in that $\widetilde{k_\Phi}$ has the following property.

For $U = (u_{i,j})_{i,j} \in O(n)$ and $\tau = (T_i)_i \in \mathscr{L}(H)^n$, 
we denote 
    \[
    U \tau = \left(\sum_{i = 1}^n u_{1,i} T_i, \sum_{i = 1}^n u_{2,i} T_i, \ldots, \sum_{i = 1}^n u_{n, i} T_i\right) \in \mathscr{L}(H)^n.
    \]

\begin{lem}
\label{solution for O(n)}
    Let $\Phi$ be a symmetric function, let $\tau^{(1)}, \tau^{(2)}, \ldots , \tau^{(m)} \in \mathscr{L}(H)^n$, and let $U_1, U_2, \ldots, U_m \in O(n)$. 
    Then we have 
    \[
    \widetilde{k_{\Phi}}\left(\bigoplus_{j = 1}^m \tau^{(j)}\right) = \widetilde{k_{\Phi}}\left(\bigoplus_{j = 1}^m U_j \tau^{(j)}\right).
    \]
\end{lem}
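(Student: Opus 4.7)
The plan is to reduce the claim to a single unitary conjugation that preserves the relevant ideal norm, and then to invoke the averaging trick to restrict attention to block-diagonal test projections.

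First I would record the key identity: for $U = (u_{ij}) \in O(n)$ and $\tau = (T_i)_i \in \mathscr{L}(H)^n$, a direct computation gives
\[
d_{U\tau}(\xi) \;=\; \sum_j \Bigl(\sum_i u_{ji} T_i\Bigr)\xi \otimes e_j \;=\; \sum_i T_i\xi \otimes Ue_i \;=\; (I_H \otimes U)\,d_\tau(\xi),
\]
so $d_{U\tau} = (I_H \otimes U)\,d_\tau$. Since the commutator $[A,\,\cdot\,]$ is linear in $\tau$, the same identity passes to commutators: $d_{[A, U\tau]} = (I_H \otimes U)\,d_{[A, \tau]}$ for every $A \in \mathscr{L}(H)$. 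Because $I_H\otimes U$ is unitary and $|\cdot|_\Phi$ is unitarily invariant, this already yields $|[A,U\tau]|_{\widetilde\Phi} = |[A,\tau]|_{\widetilde\Phi}$ for every $A$.

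Next I would apply the identity block by block. Put $\sigma = \bigoplus_{j=1}^m \tau^{(j)}$ and $\widetilde\sigma = \bigoplus_{j=1}^m U_j \tau^{(j)}$ on $H^{\oplus m}$, and take a block-diagonal $A = \bigoplus_{j=1}^m A^{(j)} \in \mathscr{R}^+_1(H)^{\oplus m}$. Since $A$ commutes with every projection onto a summand $H^{(j)}$, the commutator $[A, \widetilde\sigma]$ splits over $j$; after the canonical identification $H^{\oplus m}\otimes \mathbb{C}^n \simeq \bigoplus_j (H^{(j)}\otimes\mathbb{C}^n)$ one obtains
\[
d_{[A,\widetilde\sigma]} \;=\; W\,d_{[A,\sigma]}, \qquad W \;=\; \bigoplus_{j=1}^m (I_{H^{(j)}}\otimes U_j),
\]
with $W$ unitary. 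Hence $|[A,\widetilde\sigma]|_{\widetilde\Phi} = |[A,\sigma]|_{\widetilde\Phi}$ for every block-diagonal $A$.

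To finish I would invoke the analog of Theorem \ref{averaging}\,(2) for $\widetilde{k_\Phi}$, which supplies an increasing sequence $\{A_i\}\subset \mathscr{R}^+_1(H)^{\oplus m}$ strongly converging to $I$ with $\lim_i |[A_i,\widetilde\sigma]|_{\widetilde\Phi} = \widetilde{k_\Phi}(\widetilde\sigma)$. Combining the norm identity with the analog of Theorem \ref{basics}\,(1) gives $\widetilde{k_\Phi}(\sigma) \le \widetilde{k_\Phi}(\widetilde\sigma)$, and replacing each $U_j$ by $U_j^{-1}$ yields the reverse inequality. The main obstacle is that the unitary equivalence $d_{[A,\widetilde\sigma]} = W\,d_{[A,\sigma]}$ genuinely requires $A$ to be block-diagonal: for general $A$, applying the Leibniz rule to $[A,\widetilde\sigma_i]=[A,\sum_k D_{ik}\sigma_k]$ with $D_{ik}=\bigoplus_j (U_j)_{ik}I_{H^{(j)}}$ produces cross terms $[A,D_{ik}]\sigma_k$ that do not vanish in general. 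It is precisely Theorem \ref{averaging}\,(2) that sidesteps this difficulty by letting us reduce to block-diagonal $A$ at no cost.
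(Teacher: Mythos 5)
Your proposal is correct and follows essentially the same route as the paper: reduce to block-diagonal $A$ via Theorem~\ref{averaging}\,(2), observe that $\bigl[A,\bigoplus_j U_j\tau^{(j)}\bigr]$ and $\bigl[A,\bigoplus_j \tau^{(j)}\bigr]$ have the same $\widetilde\Phi$-norm by $O(n)$-invariance (you realize this through the unitary $W$ acting on $H^{\oplus m}\otimes\mathbb{C}^n$, the paper states it directly as $|\bigoplus_j U_j[A^{(j)},\tau^{(j)}]|_{\widetilde\Phi}=|\bigoplus_j[A^{(j)},\tau^{(j)}]|_{\widetilde\Phi}$), and finish with Theorem~\ref{basics}\,(1) plus symmetry. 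Your closing remark correctly identifies why the reduction to block-diagonal $A$ is indispensable, which is the same point the paper's use of Theorem~\ref{averaging}\,(2) addresses.
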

\begin{proof}
    By symmetry, it suffices to show 
    \[
  \widetilde{k_\Phi}\left(\bigoplus_{j = 1}^m U_j \tau^{(j)}\right) \leq  \widetilde{k_\Phi}\left(\bigoplus_{j = 1}^m \tau^{(j)}\right).
    \] 
    
    From Theorem \ref{averaging},(2), there exists an increasing sequence $\{A_i\}$ in the set $\mathscr{R}^+_1(H)^{(m)}$ converging to $I$ and 
    \[
    \lim_{i \to \infty} \NrmPhib{\Com{A_i}{\bigoplus_{j=1}^m \tau^{(j)}} } = \widetilde{k_\Phi}\left(\bigoplus_{j = 1}^m \tau^{(j)}\right).
    \]
Denoting 
    $$A_j=A_j^{(1)}\oplus A_j^{(2)}\oplus \cdots \oplus A_j^{(n)},$$
we have
    \begin{equation*}
        \begin{split}
&\NrmPhib{\Com{A_i}{\bigoplus_{j = 1}^m \tau^{(j)}}} = \NrmPhib{\bigoplus_{j = 1}^m \Com{A_i^{(j)}}{\tau^{(j)}} } 
= \NrmPhib{\bigoplus_{j = 1}^m U_j\Com{A_i^{(j)}}{\tau^{(j)}} }\\
&= \NrmPhib{\bigoplus_{j = 1}^m \Com{A_i^{(j)}}{U_j \tau^{(j)}} } = \NrmPhib{\Com{A_i}{\bigoplus_{j = 1}^m U_j\tau^{(j)}} }.
        \end{split}
    \end{equation*}
    Using Theorem \ref{basics},(1), we get 
$$
    \widetilde{k_\Phi}\left(\bigoplus_{j = 1}^m \tau^{(j)}\right)=\lim_{i\to\infty} \NrmPhib{\Com{A_i}{\bigoplus_{j = 1}^m U_j\tau^{(j)}} }
     \geq \widetilde{k_\Phi}\left(\bigoplus_{j = 1}^m U_j \tau_j\right) .
\eqno \qedhere    $$
\end{proof}

Note that Lemma \ref{non-zero k_p^-} and Lemma \ref{upper} show $0<\widetilde{k_p^-}(\tau_K)<\infty$. 
We set 
$$\widetilde{\gamma_K}=\frac{\widetilde{k_p^-}(\tau_K)^p}{\cH_p(K)}.$$ 
Now Theorem \ref{main} follows from the following theorem. 

\begin{thm}\label{tinequality} 
Under the assumption of Theorem \ref{main}, the following holds:
$$ \lambda_*^p\widetilde{\gamma_{K}}\int_K \fm\ d \cH_p \le \widetilde{k_p^-}(\tau)^p \le \lambda_*^{-p}\widetilde{\gamma_{K}} \int_K \fm\ d \cH_p.$$
\end{thm}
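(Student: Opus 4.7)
The plan is to reduce to the $\cH_p$-absolutely continuous case, decompose $\tau$ along the virtually disjoint self-similar cover $\{K_w\}_{w\in\Omega(r)}$ from the proof of Lemma \ref{upper}, and then use the rotation and translation invariance of $\widetilde{k_p^-}$ together with Lemma \ref{tensor diagonal} to recast each piece as $\tau_K$ tensored with a diagonal matrix.

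First, the $\widetilde{k_p^-}$-analog of Lemma \ref{singular vanishing} (whose proof carries over) combined with Theorem \ref{basics},(3) lets me discard the $\cH_p$-singular part, so I may assume $\tau$ is $\cH_p$-absolutely continuous. The spectral theorem then furnishes a unitary equivalence $\tau\cong\bigoplus_{k=1}^\infty(\tau_K)|_{A_k}$ with $A_k=\{\fm\ge k\}$ and $(\tau_K)|_{A_k}$ the multiplication tuple on $L^2(A_k,\cH_p)$, so that $\int_K\fm\,d\cH_p=\sum_k\cH_p(A_k)$. For each $w\in\cW$, pullback by $F_w(x)=\lambda_w U_w x+b_w$ gives a unitary conjugating $(\tau_K)|_{K_w}$ to $\lambda_w U_w\tau_K+b_w$ on $L^2(K,\cH_p)$. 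Since scalars commute with orthogonal matrices, $\lambda_w U_w\tau_K=U_w(\lambda_w\tau_K)$, and then Lemma \ref{solution for O(n)} (to remove $U_w$) and Theorem \ref{averaging},(3) (to remove $b_wI$) produce, for any family $\{W_k\}_k$ of subsets of $\cW$, the master identity
\[
\kpb{\bigoplus_{k}\bigoplus_{w\in W_k}(\tau_K)|_{K_w}}=\kpb{\tau_K\otimes\diag(\lambda_w)_{(k,w)}}.
\]

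For the upper bound, take $W_k^+(r)=\{w\in\Omega(r):K_w\cap A_k\ne\emptyset\}$. Outer regularity of the Radon measure $\cH_p$ on Borel sets, combined with $\diam(K_w)\le\diam(K)/r$ for $w\in\Omega(r)$, produces, for $r$ large enough (handled by truncation in $k$ since $\int_K\fm\,d\cH_p<\infty$ is the only nontrivial case), the estimate $\sum_{w\in W_k^+(r)}\cH_p(K_w)\le\cH_p(A_k)+\varepsilon 2^{-k}$. Since $\tau$ is a direct summand of $\bigoplus_{k,\,w\in W_k^+(r)}(\tau_K)|_{K_w}$, the identity above together with Theorem \ref{basics},(3) and Lemma \ref{tensor diagonal} (using $\max\lambda_w\le 1/r$) yield $\kpb{\tau}^p\le r^{-p}M\,\kpb{\tau_K}^p$ with $M=\sum_k|W_k^+(r)|$. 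The bound $\cH_p(K_w)\ge(\lambda_*/r)^p\cH_p(K)$ for $w\in\Omega(r)$ forces $M\le r^p\bigl(\int_K\fm\,d\cH_p+\varepsilon\bigr)/\bigl(\lambda_*^p\cH_p(K)\bigr)$, and letting $\varepsilon\to 0$ gives $\kpb{\tau}^p\le\lambda_*^{-p}\widetilde{\gamma_K}\int_K\fm\,d\cH_p$.

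The lower bound is dual: I construct $W_k^-(r)\subset\Omega(r)$ so that each $K_w$ with $w\in W_k^-(r)$ lies in $A_k$ modulo $\cH_p$-null sets, and $\sum_{w\in W_k^-(r)}\cH_p(K_w)\ge\cH_p(A_k)-\varepsilon 2^{-k}$. Then $\bigoplus_{k,\,w\in W_k^-(r)}(\tau_K)|_{K_w}$ is a direct summand of $\tau$, and Lemma \ref{tensor diagonal} with $c_1=\lambda_*/r$, together with $\cH_p(K_w)\le r^{-p}\cH_p(K)$, produces $\kpb{\tau}^p\ge\lambda_*^p\widetilde{\gamma_K}\int_K\fm\,d\cH_p-O(\varepsilon)$. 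The main obstacle is precisely this inner approximation: producing enough $K_w\in\Omega(r)$ contained in an arbitrary Borel $A_k$ modulo null sets requires a Lebesgue-type density theorem for the partition family $\{K_w\}_{w\in\Omega(r)}$ against $\cH_p$, which one could presumably handle by first treating $\fm$ a simple function whose level sets are finite unions of some $K_w$ and then approximating a general $\fm$ from below by such simple functions.
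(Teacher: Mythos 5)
Your reduction to the $\cH_p$-absolutely continuous case, the use of pullback unitaries to turn $(\tau_K)|_{K_w}$ into $\lambda_w U_w\tau_K + b_w$, and the application of Theorem \ref{averaging},(3), Lemma \ref{solution for O(n)}, and Lemma \ref{tensor diagonal} to extract $\widetilde{k_p^-}(\tau_K\otimes\diag(\lambda_w))$ are all sound, and this part matches the paper. The upper bound, based on outer covering of the nested sets $A_k=\{\fm\geq k\}$ by cells from $\Omega(r)$ and the direct-summand monotonicity from Theorem \ref{basics},(3), can be made to work (with care about the infinite direct sum over $k$, which you rightly flag as needing a truncation argument). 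But your overall architecture --- treating all multiplicity levels at once by covering each $A_k$ with a single scale-$r$ mesh and putting everything in one large diagonal tensor --- is genuinely different from the paper's, which proves the multiplicity-$1$ case first (reducing a Borel set $X$ to a finite union of cells) and then handles finite multiplicity via a disjointification trick: compose $F_1,F_2$ each $e$ times to produce distinct words $w_1,\dots,w_l$ with the \emph{same} contraction ratio $r^{-1}=\lambda_1^e\lambda_2^e$ and virtually disjoint images $F_{w_j}(Y_j)$, which turns $\bigoplus_j \tau_{Y_j}$ into $r\,\widetilde{k_p^-}(\tau_Y)$ for the single Borel set $Y=\bigcup_j F_{w_j}(Y_j)$ with $r^p\cH_p(Y)=\int\fm\,d\cH_p$ exactly, reducing back to multiplicity $1$.

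The gap you flag is a real one, and it is exactly where your approach differs in cost from the paper's. For your lower bound you need, for each Borel set $A_k$, a family of cells $K_w\in\Omega(r)$ lying in $A_k$ up to $\cH_p$-null sets and capturing most of its mass; as you observe this requires a Lebesgue density theorem for the net $\{K_w\}$ with respect to $\cH_p$, which is nontrivial and not available off the shelf in the paper's setup. Your suggested repair (approximate $\fm$ from below by simple functions whose level sets are unions of cells) is really the same statement in disguise. In fact the inner containment is not needed: since the algebra generated by the cells $\{K_w\}$ is dense in the Borel measure algebra of $(K,\cH_p)$ (the cells refine with $\diam K_w\to 0$), one can find a finite union of cells $Y$ with $\cH_p(Y\triangle A_k)$ small, and then control $\widetilde{k_p^-}(\tau_{Y\setminus A_k})$ by Lemma \ref{upper}; subadditivity of $\widetilde{k_p^-}$ then gives $\widetilde{k_p^-}(\tau_{A_k}) \geq \widetilde{k_p^-}(\tau_Y) - C\,\cH_p(Y\setminus A_k)^{1/p}$, which is the correct way to get a lower bound without a density theorem. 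The paper avoids the issue altogether by doing this only once, for a single Borel set, in the multiplicity-$1$ step, and then transporting finite multiplicity into multiplicity $1$ via the $F_1,F_2$-composition trick. So: your strategy has a genuine unfilled hole at the lower-bound step; it is fillable, but not by the route you sketch, and the paper's two-stage reduction is both cleaner and avoids the problem.
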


\begin{proof}
We denote by $\tau_{p,a}$ and $\tau_{p,s}$ respectively the $\cH_p$-absolutely continuous part of $\tau$ and the $\cH_p$-singular part of $\tau$. 
Thanks to Lemma \ref{singular vanishing}, we have $\widetilde{k_p^-}(\tau_{p,s})=0$. 
Thus Theorem \ref{basics},(3) shows that to prove the statement, we may and do assume $\tau=\tau_{p,a}$, that is, the spectral measure 
of $\tau$ is absolutely continuous with respect to $\cH_p$. 

Up to unitary equivalence, we may assume  
\[\tau =\bigoplus_{k = 1, 2, \ldots , \infty} \tau_{X_k}^{\oplus k},\]
where $\{X_k\}_{k=1}^\infty$ are mutually disjoint Borel subsets of $K$, and $\tau_{X_k}$ is the $n$-tuple of multiplication operators 
by coordinate functions in $\R^n$ acting on $L^2(X_k,\cH_p)$.
We have $\fm(x) = k$ if and only if $x \in X_k$.

First, we prove the statement in the case of multiplicity $1$, that is, $\tau = \tau_X$ for a Borel subset $X \subset K$.
In the same way as in the proof of \cite[Theorem 5.1]{V2021}, this case is reduced to the case where $X$ is a finite union of $K_{w}$. 
Thus we assume $X = \bigcup_{j = 1}^l K_{w_j}$ with
    \[
    F_{w_j}(x) = r_j V_j(x) + a_j,
    \]
and that $\{K_{w_j}\}_{j=1}^l$ are mutually virtually disjoint. 
If necessary, we decompose each $K_{w_j}$ and assume that 
    \[
    \lambda_* r_1 \le r_j \le r_1
    \]
    for $j = 1, 2, \ldots, m$. 
 We have a unitary $\varphi$ from $L^2(F_{w_j}(X), \cH_p)$ to $L^2(X, \cH_p)$ defined by
    \[
    \varphi(f) = r_j^{\frac{p}{2}} f \circ F_{w_j},
    \]
and $\tau_{K_{w_j}} = \tau_{F_{w_j}(K)}$ is unitarily equivalent to $F_{w_j}(\tau_K)$, 
where for 
$$F = (F_1, F_2, \ldots, F_n) : \mathbb{R}^n \to \mathbb{R}^n$$
and $\tau = (T_1, T_2, \ldots, T_n) \in \mathscr{L}(H)^n$,
    \[
    F(\tau) = (F_1(T_1, T_2, \ldots, T_n), F_2(T_1, T_2, \ldots, T_n), \ldots, F_n(T_1, T_2, \ldots, T_n)).
    \]
    Then we can see that 
    \begin{equation*}
        \begin{split}
            &\kpb{\tau_X} = \kpb{\bigoplus_{j = 1}^l \tau_{K_{w_j}}} = \kpb{\bigoplus_{j = 1}^l F_{w_j}(\tau_K)}\\ 
	    &= \kpb{\bigoplus_{j = 1}^l \left(r_j V_j(\tau_K) + a_j\right)}\\
            &= \kpb{\bigoplus_{j = 1}^l r_j V_j(\tau_K)} \ \text{(by Theorem \ref{averaging},(3))}\\
            &= \kpb{\bigoplus_{j = 1}^l r_j \tau_K}  \ \text{(by Theorem \ref{solution for O(n)})}\\
            &= \kpb{\tau_K \otimes \diag(r_j)_{j = 1}^l}.
        \end{split}
    \end{equation*}
By Lemma \ref{tensor diagonal}, we get,
    \[
    \lambda_\ast r_1 l^{1/p}\kpb{\tau_K} \le \kpb{\tau_K \otimes \diag(r_j)_{j = 1}^l} \le r_1 l^{1/p}\kpb{\tau_K},
    \]
and on the other hand, 
    \[
    \lambda_\ast^p r_1^p l \cH_p(K) \le \cH_p(X) \le r_1^p l\cH_p(K). 
    \]
Therefore we obtain the statement. 

Next we show the statement in the case of finite multiplicity. 
In this case, we may assume $\tau = \bigoplus_{j = 1}^l \tau_{Y_j}$ with $Y_j \subset K$. 
We choose a sufficiently large number $e$ and compose $F_1$ and $F_2$ $e$ times each to make similitudes 
$F_{w_1}, F_{w_2}, \ldots, F_{w_m}$ so that $\{F_{w_j}(X_j)\}_{j=1}^l$ are virtually disjoint. 
Now we can express 
    \[
    F_{w_j}^{-1}(x) = r W_j(x) + c_j 
    \]
    with $r = \lambda_1^{-e} \lambda_2^{-e}>0$ and some $W_j \in O(n)$, $c_j \in \mathbb{R}^n$. 
Then we have
\begin{align*}
\lefteqn{\kpb{\tau} = \kpb{\bigoplus_{j = 1}^l \tau_{Y_j}} = \kpb{\bigoplus_{j = 1}^l F_{w_j}^{-1} F_{w_j} (\tau_{Y_j})} } \\
 &= \kpb{\bigoplus_{j = 1}^l \left(r W_j(F_{w_j}(\tau_{w_j})) + c_j\right)}=r \kpb{\bigoplus_{j = 1}^l F_{w_j}(\tau_{w_j})} \\
 &= r \kpb{\bigoplus_{j = 1}^l \tau_{F_{w_j}(Y_j)}}= r \kpb{\tau_{Y}},
\end{align*}
where $Y= \bigcup_{j = 1}^l F_{w_j}(Y_j)$. 
On the other hand, 
    \[
    r^p \cH_p(Y) = \sum_{j = 1}^l r^p \cH_p(F_{w_j}(Y_j)) = \sum_{j = 1}^l \cH_p(Y_j) = \int_K \fm d\cH_p.
    \]

The general case is reduced to the finite multiplicity case thanks to Theorem \ref{basics},(3), and we are done. 
\end{proof}

\begin{cor}
    Let $\tau$ and $K$ be as above. Then the following conditions are equivalent:
    \begin{enumerate}
        \item[$(1)$] $k_p^-(\tau) = 0$.
        \item[$(2)$] The spectral measure of $\tau$ is singular with respect to $\cH_p$.
    \end{enumerate}
\end{cor}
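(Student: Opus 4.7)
The plan is to deduce the equivalence as an essentially immediate consequence of the two principal results already in hand, namely Theorem \ref{main} and Lemma \ref{singular vanishing}; no new machinery is required.

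The implication (2) $\Rightarrow$ (1) is exactly the content of Lemma \ref{singular vanishing}: if the spectral measure of $\tau$ is singular with respect to $\cH_p$, then $k_p^-(\tau) = 0$. So only the converse direction needs genuine argument, and even there the work is already done. For (1) $\Rightarrow$ (2), I would apply the lower bound in Theorem \ref{main}:
\[
C_1 \int_K \fm\, d\cH_p \le k_p^-(\tau)^p,
\]
where $\fm$ is the multiplicity function of the $\cH_p$-absolutely continuous part of $\tau$. Assuming $k_p^-(\tau) = 0$, the right-hand side vanishes, and since $C_1 > 0$ and $\fm$ is a non-negative integer-valued function, this forces $\fm = 0$ $\cH_p$-almost everywhere on $K$. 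By definition of $\fm$, this means that the $\cH_p$-absolutely continuous part of the spectral measure of $\tau$ is trivial, which is precisely the statement that the spectral measure of $\tau$ is singular with respect to $\cH_p$.

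There is no real obstacle here; the corollary is a clean packaging of the dichotomy already captured by Theorem \ref{main} combined with Lemma \ref{singular vanishing}. The only point worth a moment's care is that $\tau$ in the corollary inherits the standing assumptions of the section (commuting hermitian $n$-tuple with $\sigma(\tau) \subset K$ satisfying (A1)), so that the lower bound of Theorem \ref{main} is applicable without additional hypotheses.
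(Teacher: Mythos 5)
Your proof is correct and is precisely the argument the paper intends: the corollary is stated immediately after Theorem \ref{tinequality} with no separate proof, and the equivalence follows exactly as you say, with (2) $\Rightarrow$ (1) being Lemma \ref{singular vanishing} and (1) $\Rightarrow$ (2) coming from the lower bound (whether phrased via Theorem \ref{main} or Theorem \ref{tinequality}, which are interchangeable here since $k_\Phi \le \widetilde{k_\Phi} \le n k_\Phi$).
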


An easy modification of the proof of Theorem \ref{tinequality} shows the following: 

\begin{cor}\label{tequality} In addition to the assumption of Theorem \ref{main}, we assume (A2). 
Then the following holds:
$$\widetilde{k_p^-}(\tau)^p =\widetilde{\gamma_{K}} \int_K \fm\ d \cH_p.$$
\end{cor}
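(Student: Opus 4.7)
The plan is to retrace the proof of Theorem \ref{tinequality} and show that assumption (A2) collapses the two-sided estimate into a single equality. Under (A2) we have $\lambda_1=\cdots=\lambda_m=\lambda$, hence $\lambda_*=\lambda$ and $\lambda_w=\lambda^{|w|}$ depends only on the word length. The only source of slack in Theorem \ref{tinequality} is the inequality $\lambda_* r_1 \le r_j \le r_1$ imposed on the contraction ratios $r_j=\lambda_{w_j}$ of the pieces $K_{w_j}$ appearing in the reduction, and under (A2) this slack can be removed by equalizing word lengths.

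First, in the multiplicity-one case, I would reduce, exactly as in the proof of Theorem \ref{tinequality}, to $X=\bigcup_{j=1}^l K_{w_j}$ with $\{K_{w_j}\}_{j=1}^l$ pairwise virtually disjoint. Setting $L=\max_{1\le j\le l}|w_j|$, I would then further refine each $K_{w_j}$ as the virtually disjoint union
\[
K_{w_j}=\bigcup_{|u|=L-|w_j|}K_{w_j u},
\]
with the refined pieces remaining pairwise virtually disjoint by \cite[Theorem 3.3]{Mo}. Every piece in the resulting refinement has word length $L$, hence contraction ratio $\lambda^L$. Lemma \ref{tensor diagonal} then applies with $c_1=c_2=\lambda^L$ and delivers the exact identity
\[
\widetilde{k_p^-}(\tau_X)=\widetilde{k_p^-}\bigl(\tau_K\otimes\diag(\lambda^L,\ldots,\lambda^L)\bigr)=\lambda^L (l')^{1/p}\widetilde{k_p^-}(\tau_K),
\]
where $l'$ is the number of refined pieces. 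Combined with $\cH_p(X)=l'\lambda^{Lp}\cH_p(K)$, raising to the $p$-th power gives $\widetilde{k_p^-}(\tau_X)^p=\widetilde{\gamma_K}\cH_p(X)$.

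For finite multiplicity $\tau=\bigoplus_{j=1}^l\tau_{Y_j}$, the calculation at the end of the proof of Theorem \ref{tinequality} is already an exact identity: it produces $\widetilde{k_p^-}(\tau)=r\,\widetilde{k_p^-}(\tau_Y)$ for $Y=\bigcup_{j=1}^l F_{w_j}(Y_j)$ together with $r^p\cH_p(Y)=\int_K\fm\,d\cH_p$. Applying the multiplicity-one equality to $\tau_Y$ yields the statement in finite multiplicity. The general case follows from Theorem \ref{basics},(3) applied to the multiplicity decomposition $\tau=\bigoplus_k\tau_{X_k}^{\oplus k}$, combined with Lemma \ref{singular vanishing} to discard the $\cH_p$-singular part, precisely as in the proof of Theorem \ref{tinequality}.

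I do not anticipate a genuine obstacle: every step of Theorem \ref{tinequality} carries over verbatim except for the refinement that equalizes word lengths, and the correctness of that step rests only on the already-invoked fact that the intersections $F_w(K)\cap F_{w'}(K)$, and their further similitude images, have Hausdorff dimension strictly less than $p$, so that virtual disjointness is preserved under subdivision.
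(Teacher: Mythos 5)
Your proof is correct and takes essentially the same route as the paper, which states only that Corollary \ref{tequality} follows by ``an easy modification of the proof of Theorem \ref{tinequality}.'' Your refinement to a common word length $L$ --- so that under (A2) every contraction ratio in $\diag(r_j)$ equals $\lambda^L$, Lemma \ref{tensor diagonal} collapses to an exact identity with $c_1=c_2$, and virtual disjointness is preserved by \cite[Theorem 3.3]{Mo} --- is precisely the intended modification, and you correctly note that the finite-multiplicity step in the paper is already an exact identity.
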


\begin{ques} Let $\tau\in \mathscr{L}(H)^n$, let $1<p<\infty$, and let $a=(a_j)\in \R^k$. 
In view of the proof of Theorem \ref{tinequality}, we can see that if 
\begin{equation}\label{lp1}
\widetilde{k_p^-}(\tau\otimes \diag\left(a_{j}\right)_{j=1}^{k})=\widetilde{k_p^{-}}(\tau)\|a\|_p
\end{equation}
is true, the formula in Corollary \ref{tequality} would hold under the condition (A1). 
As was already asked in \cite[Remark 6.1]{V2021}, a more fundamental question is:  
does the equality 
\begin{equation}\label{lp2}
\widetilde{k_p^-}(\tau_1\oplus \tau_2)^p=\widetilde{k_p^-}(\tau_1)^p+\widetilde{k_p^-}(\tau_2)^p
\end{equation}
hold in general?

Thanks to \cite[Lemma 3.2]{V2021}, one might hope to prove (\ref{lp2}) (or (\ref{lp1})) by reducing it to the following question. 
Let $\{S_l\}$ and $\{T_m\}$ be sequences of finite rank operators satisfying 
$$\lim_{l\to \infty}\|S_l\|=\lim_{m\to\infty}\|T_m\|=0,$$ 
$$\lim_{l\to \infty}|S_l|_p^-=\alpha,\quad \lim_{m\to\infty}|T_m|_p^-=\beta.$$
Then does the equality 
\begin{equation}\label{lp3}
\inf_{l_r,m_r}\liminf_{r \to \infty}|S_{l_r}\oplus T_{m_r}|_p^-=(\alpha^p+\beta^p)^{1/p},
\end{equation}
where the infimum is taken over all subsequences, hold true ?  
However, there exists an easy counter example to (\ref{lp3}). 
Let $P_l$ be a projection of rank $l$, and let $S_l=p^{-1}2^{-l/p}P_{2^l}$ and $T_m=3^{1/p}S_m$. 
Then it is straightforward to show that (\ref{lp3}) does not hold for $\{S_l\}$ and $\{T_m\}$, 
and the left-hand side is strictly larger than the right-hand side. 
This means that we should use, not only subsequences, but also convex combinations of 
$\{S_l\}$ and $\{T_m\}$ in the left-hand side of (\ref{lp3}) to raise a reasonable question. 
\end{ques}

\begin{ques} For our purpose, we can relax (\ref{lp1}) as follows. 
Let $\{N_n\}_{n=1}^\infty$ be an increasing sequence of natural numbers converging to infinity, 
and let $a_{n,j}$, $n\in \N$, $1\leq j\leq N_n$, be positive numbers such that there exists $c>0$ satisfying 
$$\sum_{j=1}^{N_n}|a_{n,j}|^p=1,$$
$$\max\{a_{n,j}\}_{j=1}^{N_n}\leq c\min\{a_{n,j}\}_{j=1}^{N_n},$$
for all $n\in \N$. 
Then we can see that if 
\begin{equation}\label{lp4}
\lim_{n\to \infty}\widetilde{k_p^-}(\tau\otimes \diag\left(a_{n,j}\right)_{j=1}^{N_n})=\widetilde{k_p^{-}}(\tau)
\end{equation}
is true, the formula in Corollary \ref{tequality} would hold under the condition (A1). 
It might be interesting to ask what kind of condition on $a_{n,j}$ assures (\ref{lp4}).  
\end{ques}

\section*{Acknowledgement} 
The authors would like to thank Dan Voiculescu for useful discussion, and Kenneth Falconer and Jun Kigami for having informed 
the authors of the reference \cite{Mo}.  

\section*{Declarations} 
M. I. is supported in part by JSPS KAKENHI Grant Number JP20H01805.
The authors have no relevant financial or non-financial interests to disclose. 
Data sharing not applicable to this article as no datasets were generated or analysed during the current study.

\end{document}